\documentclass{amsart}

\usepackage{amsfonts,amssymb,amscd,amsmath,enumerate,verbatim,calc,cite}


\newcommand{\rank}{\operatorname{rank}}
\newcommand{\id}{\operatorname{id}}

\newcommand{\reg}{\operatorname{reg}}

\newcommand{\ord}{\operatorname{ord}}

\newcommand{\Z}{\mathbb{Z}}

\theoremstyle{plain}
\newtheorem{theorem}{Theorem}

\newtheorem{lemma}[theorem]{Lemma}

\newtheorem{proposition}[theorem]{Proposition}

\theoremstyle{definition}

\theoremstyle{remark}

\newtheorem{Example}[theorem]{Example}

\newcounter{hours}\newcounter{minutes}
\newcommand{\printtime}{%
        \setcounter{hours}{\time/60}%
        \setcounter{minutes}{\time-\value{hours}*60}%
        \thehours\,h\ \theminutes\,min}

\begin{document}

\title[ Degree of reductivity of a modular  representation]{Degree of
  reductivity of a modular  representation}
\date{\today \printtime}

\author{ Martin Kohls}
\address{Technische Universit\"at M\"unchen \\
 Zentrum Mathematik-M11\\
Boltzmannstrasse 3\\
 85748 Garching, Germany}
\email{kohls@ma.tum.de}

\author{M\"uf\.it Sezer}
\address { Department of Mathematics, Bilkent University,
 Ankara 06800 Turkey}
\email{sezer@fen.bilkent.edu.tr}
\thanks{Second author is supported by a grant from T\"ubitak:112T113}

\subjclass[2000]{13A50}

\begin{abstract}
For a finite dimensional representation $V$ of a group $G$ over a
field $F$, the degree of reductivity $\delta(G,V)$ is the smallest
degree $d$ such that every nonzero fixed point $v\in
V^{G}\setminus\{0\}$ can be separated from zero by a homogeneous
invariant of degree at most $d$.  We compute $\delta(G,V)$
explicitly for several classes of modular groups and
representations. We also demonstrate
that
 the maximal size of a cyclic subgroup is a sharp lower bound for
 this number in the case of modular abelian $p$-groups.
\end{abstract}

\maketitle

\section*{Introduction}

Separating points from zero by invariants is a classical problem in
invariant theory. While for infinite groups it is quite a problem to
describe those points where this is (not) possible (leading to the
definition of Hilbert's Nullcone), the finite group case is easier.
We fix the setup before going into details. Unless otherwise stated,
$F$ denotes an algebraically closed field of characteristic $p>0$.
We consider a finite dimensional representation $V$ of a finite
group $G$ over   $F$. We call $V$ a $G$-module. The action of $G$ on
$V$ induces an action of $G$ on $F[V]$ via
$\sigma(f):=f\circ\sigma^{-1}$ for $f\in F[V]$ and $\sigma\in G$.
Any homogeneous system of parameters (hsop) $f_{1},\ldots,f_{n}$ of
the invariant ring $F[V]^{G}$ has $\{0\}$ as its common zero set,
hence every nonzero point can be separated from zero by one of the
$f_{i}$. Moreover, Dade's algorithm \cite[section
3.3.1]{DerksenKemper} produces an hsop in degree $|G|$, hence every
nonzero point can be separated from zero by an invariant of degree
at most $|G|$. Therefore, for a given nozero point $v\in
V\setminus\{0\}$, the number
\[
\epsilon(G,v):=\min\{d>0\mid \text{ there is an }f\in F[V]^{G}_{d} \text{ such
that } f(v)\ne 0\}
\]
is bounded above by the group order $|G|$, and hence so is the
supremum $\gamma(G,V)$ of the  $\epsilon(G,v)$ taken over all $v\in
V\setminus\{0\}$.
There has been a recent interest in this number, see \cite{MR3071927,
  ElmerKohls1, ElmerKohls2}. In  \cite{ElmerKohls1}, another related
number $\delta(G,V)$ is introduced, which is defined to be zero if
$V^{G}=\{0\}$ and otherwise as the supremum of all $\epsilon(G,v)$
taken over all nonzero \emph{fixed} points $v\in
V^{G}\setminus\{0\}$. We propose the name \emph{degree of
reductivity} for $\delta(G,V)$. Note that a group $G$ is called
reductive, if for every $V$ and every $v\in V^{G}\setminus\{0\}$,
there exists a homogeneous positive degree invariant $f\in
F[V]^{G}_{+}$ such that $f(v)\ne 0$, hence the suggested name. It
was shown in \cite{ElmerKohls1}, that $\delta(G)$, the supremum of
the $\delta(G,V)$ taken over all $V$, equals the size of a Sylow-$p$
subgroup of $G$. The goal of this paper is to give more precise
information on $\delta(G,V)$ and compute it explicitly for several
classes of modular groups (i.e., $|G|$ is divisible by $p$) and
representations. In Section 1 we show that for a cyclic $p$-group
$G$ and every faithful $G$-module $V$, we have $\delta(G,V)=|G|$. In
that situation we compute $\epsilon(G,v)$ for every $v\in
V^{G}\setminus\{0\}$ as well. The most important stepstone that we
lay to our main results is a restriction of the degrees of certain
monomials that appear in invariant polynomials. We think that this
restriction can also be useful for further studies targeting the
generation of the invariant ring.
In Section 2 we consider an abelian  $p$-group $G$ and show that the
maximal size of a cyclic subgroup of $G$ is a lower bound for
 $\delta(G,V)$ for every faithful $G$-module $V$. We also
work out the  Klein four group and compute the $\delta$- and
$\gamma$-values for all its representations. It turns out that our
lower bound is sharp for a large number of these representations. In
the final section we deal with groups  whose order is divisible by
$p$ only once and put a squeeze on the $\delta$-values of the representations of these groups.

For a general reference for invariant theory we refer the reader to
\cite{MR1249931, MR2759466, DerksenKemper, MR1328644}.

\section{Modular cyclic groups}

  Let $G=Z_{p^{r}}$ be the cyclic group of
order $p^r$. Fix a generator $\sigma$ of $G$. It is well known that
there are exactly $p^r$ indecomposable $G$-modules $V_1, \dots ,
V_{p^r}$ over  $F$, and each indecomposable module $V_i$ is afforded
by $\sigma^{-1}$ acting via a Jordan block of dimension $i$ with
ones on the diagonal. Let $V$ be an arbitrary $G$-module over $F$.
Write
\[
\quad\quad\quad V=\bigoplus_{j=1}^k V_{n_j} \quad\quad \text{ (with
} 1\le n_{j}\le p^{r} \text { for all } j),
\]
where each $V_{n_j}$ is spanned as a vector space by $e_{1,j}, \dots
, e_{n_j, j}$. Then the action of $\sigma^{-1}$ is given by
$\sigma^{-1}(e_{i,j})=e_{i,j}+e_{i+1,j}$ for $1\le i < n$ and $\sigma^{-1}
(e_{n_j,j})=e_{n_j,j}$. Note that the fixed point space $V^G$ is
$F$-linearly spanned by $e_{n_1,1}, \dots ,e_{n_k,k}$. The dual $V_{n_j}^*$ is
isomorphic to $V_{n_j}$. Let  $x_{1,j},
\dots x_{n_j,j}$ denote the corresponding dual basis, then we have
\[
F[V]=F[x_{i,j} \; \mid \,\, 1\le i\le n_j, \,\,\, 1\le j\le k],
\]
and the action of $\sigma$ is given by
$\sigma(x_{i,j})=x_{i,j}+x_{i-1,j}$ for $1<i\le n_j$ and
$\sigma(x_{1,j}) =x_{1,j}$ for $1\le j\le k$. We call the $x_{n_{j},j}$ for
$1\le j\le k$ \emph{terminal variables}.
Set $\Delta=\sigma-1$.  Notice that $\Delta (x_{i,j})=x_{i-1,j}$ if
$i\ge 2$ and $\Delta (x_{1,j})=0$. Since $\Delta (f)=0$
for $f\in F[V]^G$, and $\Delta$ is an additive map, we have the
following, see also the discussion in
\cite[before Lemma 1.4]{SezerShank}.
\begin{lemma} \label{first}
  Let $f\in F[V]^G$ and $M$ be a monomial that
  appears in $f$. If a monomial $M'$ appears in $\Delta (M)$, then there is another monomial $M''\neq
  M$ that appears in $f$ such that $M'$  appears in $\Delta
  (M'')$ as well.
\end{lemma}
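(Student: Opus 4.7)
The plan is to exploit the identity $\Delta(f)=0$, which holds because $f\in F[V]^G$ means $\sigma(f)=f$. First I would write $f=\sum_N c_N N$ as an $F$-linear combination of pairwise distinct monomials $N$ with $c_N\in F\setminus\{0\}$. Since $\Delta$ is $F$-linear (even though it is not a derivation), applying it yields $0=\Delta(f)=\sum_N c_N\Delta(N)$.

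Next I would expand each $\Delta(N)$ as a sum of monomials. Because $\sigma(x_{i,j})=x_{i,j}+x_{i-1,j}$ (with the convention $x_{0,j}=0$), the polynomial $\sigma(N)$, obtained by multiplying out these binomials, contains the monomial $N$ itself with coefficient $1$; consequently $\Delta(N)=\sigma(N)-N$ is a sum of monomials strictly distinct from $N$. Write $\Delta(N)=\sum_{N'} d_{N,N'}N'$ with $d_{N,N'}\in F$. Collecting the coefficient of a fixed monomial $M'$ in the identity $\sum_N c_N\Delta(N)=0$ then gives
\[
\sum_N c_N d_{N,M'}=0.
\]

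Now suppose $M$ appears in $f$ (so $c_M\ne0$) and $M'$ appears in $\Delta(M)$ (so $d_{M,M'}\ne0$). The contribution $c_M d_{M,M'}$ to the above sum is nonzero, so the sum can vanish only if there exists some other index $M''\ne M$ with $c_{M''}\ne0$ and $d_{M'',M'}\ne0$. That $M''$ is a monomial appearing in $f$ such that $M'$ appears in $\Delta(M'')$, as required.

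There is no substantive obstacle here; the argument is pure bookkeeping with coefficients. The only point one has to be slightly careful with is verifying that $M$ itself is not among the monomials of $\Delta(M)$, so that the cancelling partner is genuinely a different monomial of $f$; this is exactly why I note above that $N$ occurs in $\sigma(N)$ with coefficient $1$ and is therefore killed in $\sigma(N)-N$.
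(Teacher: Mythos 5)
Your proof is correct and is exactly the argument the paper has in mind: the paper gives no separate proof of this lemma, merely remarking that it follows from $\Delta(f)=0$ together with the additivity of $\Delta$, which is precisely the coefficient-cancellation bookkeeping you carry out. Your extra observation that $N$ occurs in $\sigma(N)$ with coefficient $1$ (so that $\Delta(N)$ contains no copy of $N$) is a harmless refinement, though the cancellation argument already yields $M''\neq M$ without it, since the sum $\sum_N c_N d_{N,M'}=0$ is indexed over distinct monomials of $f$.
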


We say that a monomial $M$ lies above $M'$ if $M'$ appears in
  $\Delta(M)$.
We will use the well-known Lucas-Theorem on binomial coefficients modulo a
prime in our computations (see \cite{MR0023257} for a short proof):

\begin{lemma}[Lucas-Theorem]
Let $s, t$ be integers with base-$p$-expansions
$t=c_mp^m+c_{m-1}p^{m-1}+\cdots +c_0$ and
$s=d_mp^m+d_{m-1}p^{m-1}+\cdots +d_0$, where $0\le c_i,d_i\le p-1$
for  $0\le i\le m$. Then ${t \choose s}\equiv \prod _{0\le i\le
m}{c_i\choose d_i}  \mod p$.
\end{lemma}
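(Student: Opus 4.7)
The plan is to work in the polynomial ring $F[x]$ (or $\mathbb{F}_p[x]$) and compare coefficients in two expansions of $(1+x)^t$ reduced modulo $p$. The key algebraic input is the Frobenius/Freshman's dream identity $(1+x)^p\equiv 1+x^p\pmod p$, which follows at once from the fact that $\binom{p}{k}\equiv 0\pmod p$ for $0<k<p$. Iterating this $i$ times yields $(1+x)^{p^i}\equiv 1+x^{p^i}\pmod p$ for every $i\ge 0$.

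Using the base-$p$ expansion $t=\sum_{i=0}^m c_ip^i$, I would then write
\[
(1+x)^t \;=\; \prod_{i=0}^m \bigl((1+x)^{p^i}\bigr)^{c_i} \;\equiv\; \prod_{i=0}^m (1+x^{p^i})^{c_i}\pmod p,
\]
and expand each factor on the right via the ordinary binomial theorem:
\[
\prod_{i=0}^m (1+x^{p^i})^{c_i} \;=\; \prod_{i=0}^m \sum_{d_i=0}^{c_i}\binom{c_i}{d_i}x^{d_ip^i}.
\]
Multiplying out produces a sum of monomials $x^{\sum d_ip^i}$ with coefficients $\prod_i \binom{c_i}{d_i}$, where the exponents $d_i$ range over $\{0,1,\dots,c_i\}\subseteq\{0,\dots,p-1\}$.

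The final step is to extract the coefficient of $x^s$ from both sides. On the left it is of course $\binom{t}{s}$. On the right, because each $d_i$ is forced to satisfy $0\le d_i\le p-1$, the uniqueness of the base-$p$ representation of $s=\sum_i d_ip^i$ ensures that exactly one tuple $(d_0,\dots,d_m)$ contributes, and it is precisely the digit sequence of $s$; if $s>t$ in the sense that some $d_i>c_i$, the corresponding binomial coefficient $\binom{c_i}{d_i}$ is zero, consistently handling the edge case. Comparing the two coefficients yields the congruence. I do not anticipate a genuine obstacle here — the only subtlety is being careful that the digit expansion of $s$ on the right-hand side is read off uniquely, so that no cross-terms interfere with the product formula.
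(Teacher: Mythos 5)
Your argument is correct and complete: the Freshman's-dream factorization of $(1+x)^t$ modulo $p$ together with the uniqueness of base-$p$ digits is exactly the standard short proof of Lucas' theorem. The paper does not prove the lemma itself but cites \cite{MR0023257} (Fine) for "a short proof," and that reference uses precisely this generating-function argument, so your proposal coincides with the intended one.
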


 The following lemma is the main technical stepstone for the rest of the paper.

\begin{lemma}\label{divide}
For $0\le s\le r$, define
\[
J_{s}=\{j\in\{1,\ldots,k\}\mid n_{j}>p^{s-1}\}.
\]
Let $M=\prod_{1\le j\le k}x_{n_j,j}^{a_j}$ be
a monomial consisting only of terminal variables, that appears in an invariant polynomial with nonzero
coefficient. Then $p^s$ divides $a_j$ for all $j\in J_s$.
\end{lemma}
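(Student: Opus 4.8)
The plan is to reduce to a single Jordan block and then induct on its dimension. Note first that $\sigma$ acts within each block $V_{n_j}$, so the multigrading of $F[V]$ in which $x_{i,j}$ has degree $1$ in the $j$th block and $0$ in the others is $\sigma$-stable; hence I may assume $f$ is multihomogeneous of the same multidegree $(a_1,\dots,a_k)$ as $M$. Fix $j_0\in J_s$. I would then apply the $F$-linear map $\Phi$ that reads off, from a multihomogeneous polynomial, the coefficient of $\prod_{j\ne j_0}x_{n_j,j}^{a_j}$ (all terminal in the blocks $j\ne j_0$), regarding the result as a polynomial in $x_{1,j_0},\dots,x_{n_{j_0},j_0}$. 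Since $\sigma$ only decreases first indices, this particular monomial in the off-$j_0$ variables can arise under $\sigma$ only from itself; consequently $\Phi$ intertwines $\sigma$ with its restriction to block $j_0$, so $\Phi(f)$ is an invariant of the single block $V_{n_{j_0}}$ which still contains $x_{n_{j_0},j_0}^{a_{j_0}}$ (with coefficient $c_M\ne 0$). This reduces the statement to the one-block assertion: for a single Jordan block $V_n$ with terminal variable $x_n$, if $x_n^a$ occurs in an invariant then $p^s\mid a$ whenever $n>p^{s-1}$.

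For the one-block assertion I would fix $s$ and induct on $n$. The inclusion of the top submodule $\langle e_2,\dots,e_n\rangle\cong V_{n-1}$ dualizes to a $\sigma$-equivariant algebra surjection $\theta\colon F[V_n]\to F[V_{n-1}]$ with $\theta(x_1)=0$ and $\theta(x_i)=x_i'$ for $i\ge 2$; under it the terminal variable maps to the terminal variable of $V_{n-1}$, and $x_n^a$ is the unique monomial sent to $(x_n')^a$, so $(x_n')^a$ occurs in the invariant $\theta(f)$. As long as $n-1>p^{s-1}$ the inductive hypothesis applied to $V_{n-1}$ gives $p^s\mid a$ at once; thus everything comes down to the boundary case $n=p^{s-1}+1$ (with the trivial base $s=1$, $n=2$, handled directly).

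So the crux is $n=p^{s-1}+1$, where the previous step already yields $p^{s-1}\mid a$; write $a=p^{s-1}A$. Here $\sigma^{p^{s-1}}-1$ is very rigid: Lucas' Theorem gives $\sigma^{p^{s-1}}(x_i)=x_i+x_{i-p^{s-1}}$, so it fixes $x_1,\dots,x_{p^{s-1}}$ and sends $x_n$ to $x_n+x_1$. Applying it to $x_n^a$, Lucas' Theorem shows the monomial $x_n^{a-p^{s-1}}x_1^{p^{s-1}}$ appears with coefficient $\equiv A\pmod p$; the goal $p\mid A$ amounts to showing that the remaining contributions to this monomial, coming from the intermediate monomials $x_n^{a-p^{s-1}+t}x_1^{p^{s-1}-t}$ with $0<t<p^{s-1}$, cancel modulo $p$. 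I would establish this by feeding monomials into Lemma~\ref{first} in a descending cascade: each stage exhibits, via Lucas' Theorem, a target occurring with nonzero coefficient in $\Delta(M)$ for an $M$ already known to lie in $f$, forcing a new monomial above the target into $f$; choosing the targets so that the number of monomials lying above them strictly decreases, the cascade must terminate at a monomial possessing a unique monomial above it, contradicting Lemma~\ref{first}. The hard part is precisely the management of this cascade and its Lucas bookkeeping — ensuring that every forced coefficient is genuinely nonzero and that the terminal monomial truly has a unique parent. It is here that the full group, rather than the order-$p$ subgroup $\langle\sigma^{p^{s-1}}\rangle$ (which only detects a single factor of $p$), must be used, and where the exact shape $n=p^{s-1}+1$, which makes $\sigma^{p^{s-1}}$ move only the terminal variable, is decisive.
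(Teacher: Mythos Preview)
Your reduction to a single block via the multigrading and the coefficient map $\Phi$ is correct, and the surjection $\theta\colon F[V_n]\to F[V_{n-1}]$ indeed lets you slide $n$ down to $p^{s-1}+1$. But these steps only relocate the difficulty: the entire content of the lemma sits in your ``boundary case'' $n=p^{s-1}+1$, and there you have not given a proof. What you call the ``descending cascade'' is a plan, not an argument: you have not identified which targets to feed into Lemma~\ref{first}, you have not verified any of the Lucas non-vanishing conditions beyond the first step, and you have not exhibited a terminal monomial with a unique parent. As written the proof is incomplete at exactly the point that matters.

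It is also worth noting that the cascade, even if it can be made to work, is going in a harder direction than necessary. The paper's proof does not reduce to a boundary dimension and does not attempt a multi-step cascade in the full group. Instead it inducts on $s$ (simultaneously for all $r\ge s$): given $M=x_{n}^{a}M'$ and $t<s$ the least index with nonzero base-$p$ digit, one application of Lemma~\ref{first} to the target $x_{n-1}^{p^{t}}x_{n}^{a-p^{t}}M'$ forces a monomial $M''=x_{n-1}^{d}x_{n}^{a-d}M'$ (with $1\le d<p^{t}$) into $f$. The key move you are missing is then to pass to the \emph{index-$p$} subgroup $H=\langle\sigma^{p}\rangle\cong Z_{p^{r-1}}$: under $H$ the block $V_{n}$ breaks up so that $x_{n}$ and $x_{n-1}$ are \emph{both} terminal, and the $H$-block through $x_{n}$ has dimension $\lceil n/p\rceil>p^{s-2}$. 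Hence the inductive hypothesis (case $s-1$) applies directly to $M''$ and gives $p^{s-1}\mid(a-d)$, contradicting $p^{t}\nmid(a-d)$. Your instinct that ``the full group, rather than the order-$p$ subgroup $\langle\sigma^{p^{s-1}}\rangle$, must be used'' is right in spirit but points at the wrong subgroup: what is needed is one step of $\Delta=\sigma-1$ in $G$ followed by descent to $\langle\sigma^{p}\rangle$, not an iterated cascade in $G$ itself.
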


\begin{proof}
As the case $s=0$ is trivial, we will assume $s\ge 1$ from now.
Let $f\in F[V]^{G}$ be an invariant polynomial  in which $M$ appears
with a nonzero coefficient, and $j\in J_{s}$. Without loss of
generality, we assume $j=1$ and $a_{1}\ne 0$. Set $M'=\prod_{2\le j\le
k}x_{n_j,j}^{a_j}$. For simplicity we denote $a_1$ with $a$. Then
$M=x_{n_1,1}^aM'$, and the claim is $p^{s}|a$. We proceed by
induction on $s$ and at each step we verify the claim for all $r$ such that $s\le r$.
    Assume $s=1$ and  $r\ge 1=s$.  By way of contradiction, we assume
$p\not|a$. Then we can write $a=c_1p+c_0$, where $c_1$ and $c_0$ are
non-negative integers with $1\le c_0<p$. We have $\sigma
(M)=\sigma(x_{n_1,1}^a)\sigma (M')$ and
$\sigma(x_{n_1,1}^a)=(x_{n_1,1}+x_{n_1-1,1})^a$. Since $M'$ appears
in $\sigma(M')$ with coefficient one, it follows that the coefficient
of $x_{n_1-1,1}x_{n_1,1}^{a-1}M'$ in $\sigma(M)$ is ${a\choose
1}=a\equiv c_{0}\not\equiv 0\mod p$.  Therefore
$x_{n_1-1,1}x_{n_1,1}^{a-1}M'$ appears in $\sigma(M)-M=\Delta (M)$.
As $M=x_{n_1,1}^aM'$ only consists of terminal variables, it can be
seen easily that it is the only monomial lying above
$x_{n_1-1,1}x_{n_1,1}^{a-1}M'$, which is a contradiction by Lemma
\ref{first}.

Next assume that $s>1$ and let $r\ge s$ be arbitrary. Note that the
induction hypothesis is
 that the assertion holds for every pair $r',s'$ with    $1\le s'\le r'$ and $s'< s$.  Consider the base-$p$-expansion
$a=c_lp^l+c_{l-1}p^{l-1}+\cdots +c_0p^{0}$ of $a$ where $0\le c_l,
\dots , c_0 \le p-1$. Let $t$ denote the smallest integer such that
$c_t\neq 0$. We claim that $p^{s}|a$, which is equivalent to $t\ge
s$. By way of contradiction  assume $t<s$. Define $b=a-p^t$. Then
the base-$p$-expansion of $b$ is $c_lp^l+\cdots c_{t+1}p^{t+1}
+(c_t-1)p^t+0\cdot p^{t-1}+ \dots +0\cdot p^{0}$.  As in the basis
case, we see that the coefficient of
$x_{n_1-1,1}^{p^t}x_{n_1,1}^{a-p^t}M'$ in $\sigma
(M)=(x_{n_1,1}+x_{n_1-1,1})^a\sigma (M')$ is ${a\choose p^{t}}$. By
the Lucas-Theorem, ${a \choose p^{t}}\equiv {c_t\choose
1}=c_{t}\not\equiv 0 \mod p$. So
$x_{n_1-1,1}^{p^t}x_{n_1,1}^{a-p^t}M'$ appears in $\Delta(M)$. By
Lemma \ref{first} there exists another monomial $M''$ in $f$ that
lies above $x_{n_1-1,1}^{p^t}x_{n_1,1}^{a-p^t}M'$. We have
$M''=x_{n_1-1,1}^{d}x_{n_1,1}^{a-d}M'$ for some $1\le d<p^t$. Since
$a-p^t<a-d<a$ and $p^t$ divides $a$ it follows that
\[
p^t \text{  does not divide } a-d \quad (*).
\]
Let $H$ denote the subgroup of $G$ generated by $\sigma^p$. Note
that $H\cong Z_{p^{r-1}}$ and consider  $V_{n_{1}}$ as an
$H$-module. From $\sigma^{p}-1=(\sigma-1)^{p}$ it follows  that
$V_{n_{1}}$ decomposes into $p$ indecomposable $H$-modules such that
$x_{n_1,1}, x_{n_1-1,1}, \dots , x_{n_1-p+1,1}$ become terminal
variables with respect to the $H$-action. Note that by assumption,
$r\ge s\ge 2$ and as $1=j\in J_{s}$, we have $n_{1}>p^{s-1}\ge p$.
Also the $H(\cong Z_{p^{r-1}})$-module generated by $x_{n_1,1}$ has
dimension $\lceil\frac{n_{1}}{p}\rceil>p^{s-2}$. Therefore the
monomial $M''=x_{n_1,1}^{a-d}\cdot x_{n_1-1,1}^{d}M'$
appearing in $f\in F[V]^{G}\subseteq F[V]^{H}$ consists only of
terminal variables with respect to the $H(\cong Z_{p^{r-1}})$-action
and $x_{n_{1},1}$ is a terminal variable whose index would appear in
the set $J_{s-1}'$ corresponding to the considered $H(\cong
Z_{p^{r-1}})$-action. Therefore, the induction hypothesis (with
$s'=s-1$ and $r'=r-1$) applied to $M''$ yields $p^{s-1}|a-d$. As we
have assumed $t<s$, it follows that $p^{t}$ divides $a-d$, which is
a contradiction to (*) above.
\end{proof}

With this lemma we can precisely compute the degree required to
separate a nonzero fixed point from zero.

\begin{theorem}
Let $v=\sum_{1\le j\le k}c_je_{n_j,j}\in V^{G}\setminus\{0\}$ be a nonzero fixed point,
where $c_{1},\ldots,c_{k}\in F$. Let $J$
denote the set of all $j\in\{1,\ldots,k\}$ such that $c_j\neq 0$, and $s$ denote
the maximal integer  such that $p^{s-1}<n_j$ for all $j\in J$. Then
$\epsilon (G,v)=p^s$.

In particular, if $V$ is a faithful $G$-module, then $\delta
(G,V)=p^r$.
\end{theorem}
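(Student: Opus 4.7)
The plan is to show both $\epsilon(G,v)\ge p^s$ and $\epsilon(G,v)\le p^s$, then read off the faithful case. The starting observation for both bounds is that $x_{i,j}(v)=0$ unless $i=n_j$ and $j\in J$, in which case $x_{n_j,j}(v)=c_j\ne 0$; thus any monomial contributing a nonzero value to $f(v)$ for an invariant $f$ must consist purely of terminal variables $x_{n_j,j}$ with $j\in J$.

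For the lower bound I would let $f\in F[V]^G$ be homogeneous of degree $d$ with $f(v)\ne 0$. By the observation above, some monomial $M=\prod_{j\in J}x_{n_j,j}^{a_j}$ appears in $f$ with nonzero coefficient. Since $s$ was chosen so that $J\subseteq J_s$, Lemma \ref{divide} forces $p^s\mid a_j$ for every $j\in J$. If $d<p^s$, then $\sum_{j\in J}a_j=d<p^s$ combined with $p^s\mid a_j$ forces every $a_j$ to be $0$, making $M=1$, which contradicts $d>0$. Hence $d\ge p^s$.

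For the upper bound I would exhibit an explicit invariant of degree $p^s$ not vanishing on $v$, via an orbit product. By maximality of $s$, there is some $j_0\in J$ with $n_{j_0}\le p^s$: either $s=r$ (in which case this is automatic since $n_j\le p^r$), or $s+1$ violates the defining property and provides such a $j_0$. Since $\Delta^{p^s}(x_{n_{j_0},j_0})=0$, this variable is fixed by the subgroup $H:=\langle\sigma^{p^s}\rangle$ of index $p^s$, so $f:=\prod_{i=0}^{p^s-1}\sigma^i(x_{n_{j_0},j_0})$ is a $G$-invariant of degree $p^s$. As $v$ is $G$-fixed, each factor satisfies $\sigma^i(x_{n_{j_0},j_0})(v)=x_{n_{j_0},j_0}(\sigma^{-i}v)=c_{j_0}$, giving $f(v)=c_{j_0}^{p^s}\ne 0$.

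For the \emph{in particular} statement, faithfulness of $V$ forces some indecomposable summand $V_{n_{j_0}}$ with $n_{j_0}>p^{r-1}$ (otherwise $\sigma^{p^{r-1}}$ would act trivially on $V$); then $v:=e_{n_{j_0},j_0}\in V^G\setminus\{0\}$ has $J=\{j_0\}$ and $s=r$, so $\epsilon(G,v)=p^r$ gives $\delta(G,V)\ge p^r$. Combined with the trivial bound $s\le r$ valid for every nonzero fixed point, this yields $\delta(G,V)=p^r$. The substance of the argument is carried by Lemma \ref{divide}; the only subtlety is matching the maximality defining $s$ with the availability of a suitable $j_0$ for the orbit product, which is minor bookkeeping rather than a real obstacle.
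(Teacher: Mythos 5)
Your proposal is correct and follows essentially the same route as the paper: the lower bound via Lemma \ref{divide} applied to a purely terminal-variable monomial supported on $J\subseteq J_s$, and the upper bound via the degree-$p^s$ orbit product of $x_{n_{j_0},j_0}$ evaluated on the fixed point. The only differences are cosmetic (writing the orbit product as $\prod_{i=0}^{p^s-1}\sigma^i(x_{n_{j_0},j_0})$ and spelling out the divisibility-versus-degree step), so there is nothing to add.
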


\begin{proof}
Any homogeneous invariant polynomial of positive degree that is
nonzero on $v$ must contain a monomial $M$ with a nonzero
coefficient in the variables of the set $\{x_{n_j,j}\mid j \in J\}$.
With $s$ as defined above, by the previous lemma the exponents of
the $x_{n_{j},j}$ in $M$ are divisible by $p^s$ for all $j\in
J\subseteq J_{s}$. Hence $\epsilon (G,v)\ge p^s$, so it remains to
prove the reverse inequality. The maximality condition on $s$
implies the existence of a $j'\in J$ such that $p^{s-1}<n_{j'}\le
p^s$. Then the Jordan block representing the action of
$\sigma$ on $x_{1,j'}, \dots x_{n_{j'},j'}$ has order $p^s$, and so
the orbit product $N=\prod_{m \in Gx_{n_{j'},j'}}m\in F[V]_{+}^{G}$
is an invariant homogeneous polynomial of degree $p^s$. Furthermore,
for every $\sigma\in G$ and the corresponding element
$m=\sigma(x_{n_{j'},j'})\in Gx_{n_{j'},j'}$ in the orbit, we have
$m(v)=(\sigma(x_{n_{j'},j'}))(v)=x_{n_{j'},j'}(\sigma^{-1}v)=x_{n_{j'},j'}(v)=c_{j'}$,
where we used $v\in V^{G}$. Hence, $N(v)=c_{j'}^{p^{s}}\ne 0$, which
shows $\epsilon (G,v)\le p^s$.

For the final statement, note that if $V$ is a faithful $G$-module,
then there is a $j'\in\{1,\ldots,k\}$ satisfying $p^{r-1}< n_{j'}\le
p^r$. Now for $v=e_{n_{j'},j'}\in V^{G}\setminus\{0\}$, in the
notation above we have $J=\{j'\}$ and $s=r$, so the first part
yields $\epsilon (G,v)=p^r=|G|$. It follows $\delta(G,V)=|G|$ as
claimed.
\end{proof}

 We now consider the general modular cyclic group $\tilde{G}=Z_{p^{r}m}$, where $m$ is a non-negative integer with
$(p,m)=1$. Let $G$ and $N$ be the subgroups of $\tilde{G}$ of order
$p^{r}$ and $m$, respectively. Fix a generator $\sigma$  of $G$ and
a generator $\alpha$  of $N$. For every $1\le n\le p^r$ and an
$m$-th root of unity $\lambda\in F$, there is an $n$-dimensional
$\tilde{G}$-module $W_{n,\lambda}$ with basis $e_1, e_2, \dots ,
e_n$ such that $\sigma^{-1} (e_i)=e_i+e_{i+1}$ for $1\le i\le n-1$,
$\sigma^{-1} (e_n)=e_n$ and $\alpha (e_i)=\lambda e_i$ for $1\le
i\le n$. It is well-known that the $W_{n,\lambda}$ form the complete
list of indecomposable $\tilde{G}$-modules, see \cite[Lemma
3.1]{KohlsSezerKleinFour} for a proof.

Notice that the indecomposable module $W_{n,\lambda}$ is faithful if and only
if $p^{r-1}<n\le p^r$ and $\lambda$ is a primitive $m$-th
root of unity. Let $x_{1}, \dots , x_{n}$ denote the corresponding basis
for $W_{n,\lambda}^*$. We have an isomorphism $W_{n,\lambda}^*\cong
W_{n,\lambda^{-1}}$, where the action of $\sigma$ on  $x_1, \dots , x_n$ is
given by an upper diagonal
Jordan block. Note that if $\lambda\ne 1$, we have $W_{n,\lambda}^{\tilde{G}}=\{0\}$, and so $\delta
(\tilde{G},W_{n,\lambda})=0$.

\begin{proposition}
Let $\tilde{G}=Z_{p^{r}m}$ and  $W_{n,\lambda}$ be a faithful
indecomposable $\tilde{G}$-module, i.e. $p^{r-1}<n\le p^r$ and $\lambda\in F$ is a primitive $m$-th
root of unity. Then $\gamma (\tilde{G},W_{n,\lambda})=|\tilde{G}|=p^{r}m$.
\end{proposition}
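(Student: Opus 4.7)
The plan is to use the standard upper bound $\gamma \le |\tilde{G}|$ coming from Dade's algorithm and to produce a matching lower bound by testing $\tilde{G}$-invariants on the vector $v = e_n$, combining two coprime divisibility constraints on the degree of any separating invariant.

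For the upper bound, Dade's algorithm produces a homogeneous system of parameters for $F[W_{n,\lambda}]^{\tilde{G}}$ of degree $|\tilde{G}| = p^{r}m$, so every nonzero vector of $W_{n,\lambda}$ can be separated from zero by a homogeneous invariant of degree at most $p^{r}m$; thus $\gamma(\tilde{G}, W_{n,\lambda}) \le p^{r}m$.

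For the lower bound, the plan is to set $v = e_n$ and show $\epsilon(\tilde{G}, v) \ge p^{r}m$. Since $x_i(v) = \delta_{i,n}$, a homogeneous $\tilde{G}$-invariant $f$ of degree $d$ can satisfy $f(v) \ne 0$ only if the monomial $x_{n}^{d}$ appears in $f$ with nonzero coefficient. Now view $f$ as an invariant of the Sylow $p$-subgroup $G = \langle \sigma \rangle \cong Z_{p^{r}}$: as a $G$-module, $W_{n,\lambda}$ is the indecomposable $V_n$, in which $x_n$ is the unique terminal variable. Since $p^{r-1} < n \le p^{r}$, Lemma~\ref{divide} applied with $s = r$ and the monomial $x_{n}^{d}$ forces $p^{r} \mid d$. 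Separately, $\alpha$ acts diagonally on the dual basis by $\alpha(x_i) = \lambda^{-1} x_i$, so $\alpha(f) = \lambda^{-d} f$ for any homogeneous $f$ of degree $d$; invariance of $f$ together with $\lambda$ being a primitive $m$-th root of unity yields $m \mid d$. Because $\gcd(p^{r}, m) = 1$, the two constraints combine to $p^{r}m \mid d$, hence $d \ge p^{r}m$, as required.

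No serious obstacle is anticipated. The only point meriting attention is that Lemma~\ref{divide} is stated for modular cyclic $p$-group invariants, whereas here $f$ is invariant under the larger group $\tilde{G}$; this is harmless since $F[W_{n,\lambda}]^{\tilde{G}} \subseteq F[W_{n,\lambda}]^{G}$, so the lemma applies verbatim. The proof essentially factors the problem into the $p$-part (handled by Lemma~\ref{divide}) and the coprime part (handled by the eigenvalue condition), which interact trivially because $\gcd(p^{r},m) = 1$.
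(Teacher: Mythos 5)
Your proposal is correct and follows essentially the same route as the paper: the upper bound via Dade's hsop algorithm, and the lower bound by testing on $e_{n}$, forcing $x_{n}^{d}$ to appear, applying Lemma~\ref{divide} to $f$ viewed as a $Z_{p^{r}}$-invariant to get $p^{r}\mid d$, and using the scalar action of $\alpha$ to get $m\mid d$, combined via coprimality. No issues.
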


\begin{proof}
Let $f\in F[W_{n,\lambda}]_{+}^{\tilde{G}}$ be a homogeneous
invariant of positive degree $d$ such that $f(e_n)\neq 0$. Then $f$
contains the monomial $x_n^d$ with a nonzero coefficient.
Considered as a $G$-module, $W_{n,\lambda}$ is isomorphic to the
indecomposable $G$-module $V_{n}$. Since $f$ is particularly
$G$-invariant, we get from Lemma \ref{divide} that $p^r$ divides
$d$. As $f$ is also $\alpha$-invariant, and $\alpha$ acts just by
multiplication with $\lambda^{-1}$ on every variable, it follows
that $x_n^d$ is $\alpha$-invariant, hence we have $\lambda^{d}=1$.
As $\lambda$ is a primitive $m$-th root of unity, it follows that
$m$ divides $d$. Since $p^r$ and $m$ are coprime we get that
$p^{r}m$ divides $d$. Therefore $\gamma (\tilde{G},W_{n,\lambda})\ge
\epsilon(\tilde{G},e_{n})\ge p^{r}m=|\tilde{G}|$. The reverse
inequality always holds by Dade's hsop algorithm.
\end{proof}

Now let $1\le n \le p^{r}$ be arbitrary and $\lambda\in F$ be an
arbitrary $m$th root of unity. Define $0\le s\le r$ such that
$p^{s-1}<n\le p^{s}$ and let $m'$ denote the order of $\lambda$ as
an element of the multiplicative group $F^{\times}$ (then $m'|m$).
Then $W_{n,\lambda}$ can be considered as a faithful
$Z_{p^{s}m'}$-module, hence the result above yields
$\gamma(\tilde{G},W_{n,\lambda})=|Z_{p^{s}m'}|=p^{s}m'$. As the
$\gamma$-value of a direct sum of modules is the maximum of the
$\gamma$-values of the summands (see fore example \cite[Proposition
3.3]{ElmerKohls1}), the proposition above allows to compute
$\gamma(\tilde{G},V)$ for every $\tilde{G}$-module. This precisises
the result of \cite[Corollary 4.2]{ElmerKohls1}, which states
$\gamma(\tilde{G})=|\tilde{G}|$. As an interesting example, take
again $\lambda$ a primitive $m$th root of unity and consider the
$\tilde{G}$-module $V:=W_{p^{r},1}\oplus W_{1,\lambda}$. Note that
though $V$ is a faithful $\tilde{G}$-module, we get from the above
\[
\gamma(\tilde{G},V)=\max\{\gamma(\tilde{G},W_{p^{r},1}),\gamma(\tilde{G},W_{1,\lambda})\}=\max\{p^{r},m\}
\]
which is strictly smaller than $|\tilde{G}|=p^{r}m$ if $r>0$ and $m>1$.

\section{Modular abelian $p$-groups}

Before we focus on abelian $p$-groups, we start with a more general lemma.

\begin{lemma}\label{lemmapgroup}
Let $\tilde{G}$ be a $p$-group, $V$ a faithful $\tilde{G}$-module and let $\sigma\in \tilde{G}$ be of order $p^{r}$ such
that $\sigma^{p^{r-1}}\in Z(\tilde{G})$ (the center of $\tilde{G}$). Then  $\delta
(\tilde{G},V)\ge p^r$.
\end{lemma}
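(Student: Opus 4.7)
The plan is to produce a nonzero $\tilde{G}$-fixed point $v\in V^{\tilde{G}}$ whose restriction to the cyclic subgroup $\langle\sigma\rangle\cong Z_{p^{r}}$ already forces separation degree $p^{r}$, and then to invoke the main theorem of Section~1.

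The central construction is the $F$-linear operator $\tau-1\colon V\to V$, where $\tau:=\sigma^{p^{r-1}}\in Z(\tilde{G})$. Its image $U:=(\tau-1)(V)$ is a $\tilde{G}$-submodule of $V$ precisely because $\tau$ is central, so that $\tau-1$ is $\tilde{G}$-equivariant. Faithfulness of $V$ forces $\sigma$ to act with order exactly $p^{r}$, hence $\tau$ acts non-trivially on $V$ and $U\ne 0$. Since $\tilde{G}$ is a $p$-group and $\cha F=p$, the standard fixed-point lemma gives $U^{\tilde{G}}\ne 0$, and we may pick any $v\in U^{\tilde{G}}\setminus\{0\}$.

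I would then analyze $U$ as a module over $\langle\sigma\rangle$. Decomposing $V=\bigoplus_{j} V_{n_{j}}$ into Jordan blocks for $\sigma$ and using the characteristic-$p$ Frobenius identity $\tau-1=\sigma^{p^{r-1}}-1=(\sigma-1)^{p^{r-1}}$, one checks block-by-block that the image of $(\sigma-1)^{p^{r-1}}$ on $V_{n_{j}}$ is zero if $n_{j}\le p^{r-1}$, and otherwise is spanned by $e_{p^{r-1}+1,j},\dots,e_{n_{j},j}$, which in particular contains the terminal vector $e_{n_{j},j}$. Intersecting with the $\langle\sigma\rangle$-fixed space yields $U^{\langle\sigma\rangle}=\operatorname{span}\{e_{n_{j},j}\mid n_{j}>p^{r-1}\}$. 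Since $v\in V^{\tilde{G}}\subseteq V^{\langle\sigma\rangle}$, we obtain an expansion $v=\sum_{j\in J}c_{j}e_{n_{j},j}$ in which every $j\in J$ satisfies $n_{j}>p^{r-1}$.

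To finish, I would apply the main theorem of Section~1 to $v$ viewed as a $\langle\sigma\rangle$-fixed point: the maximal $s$ with $p^{s-1}<n_{j}$ for all $j\in J$ must equal $r$, so $\epsilon(\langle\sigma\rangle,v)=p^{r}$. Because $F[V]^{\tilde{G}}\subseteq F[V]^{\langle\sigma\rangle}$, we have $\epsilon(\tilde{G},v)\ge\epsilon(\langle\sigma\rangle,v)=p^{r}$, hence $\delta(\tilde{G},V)\ge p^{r}$. The conceptual obstacle that the centrality hypothesis is designed to overcome is that the $\langle\sigma\rangle$-Jordan decomposition of $V$ is generally not $\tilde{G}$-stable, so one cannot directly extract ``large-block'' $\tilde{G}$-fixed points by intersection; the image of the central operator $\tau-1$ is the right $\tilde{G}$-submodule because it is simultaneously $\tilde{G}$-invariant (by centrality) and, via the Frobenius identity, supported on exactly the Jordan blocks of $\sigma$ of size greater than $p^{r-1}$.
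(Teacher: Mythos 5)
Your proposal is correct and follows essentially the same route as the paper: the paper also takes the image $W$ of $(\sigma^{-1}-1)^{p^{r-1}}$ (which coincides with your $U=(\sigma^{p^{r-1}}-1)(V)$ since $\sigma^{-1}-1=-\sigma^{-1}(\sigma-1)$), notes it is a $\tilde G$-submodule by centrality, identifies $W^{\langle\sigma\rangle}$ with the span of the terminal vectors of the Jordan blocks of size $>p^{r-1}$, and picks a nonzero $\tilde G$-fixed point there. The only cosmetic difference is that you invoke the Theorem of Section~1 for $\epsilon(\langle\sigma\rangle,v)=p^{r}$ while the paper applies Lemma~\ref{divide} directly; these are the same mechanism.
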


\begin{proof}
Let $G$ denote the subgroup of $\tilde{G}$ generated by $\sigma$. We
follow the notation of the previous section and consider the
decomposition $V=\bigoplus_{j=1}^k V_{n_j}$ of $V$ as a $G$-module.
Since $V$ is also  faithful as $G$-module, we have $J:=J_{r}=
\{j\in\{1,\ldots,k\}\mid n_j>p^{r-1} \}\neq \emptyset$. We can
choose a suitable basis of $V$ such that $\sigma^{-1}$
 acts on this basis via sums of Jordan blocks of dimensions
 $n_{1},\ldots,n_{k}$. Set $\Gamma =\sigma^{-1}-1$.  Let $W$ denote the image of the map
$\Gamma^{p^{r-1}}$ on $V$. Since $\sigma^{p^{r-1}}\in Z(\tilde{G})$,
we have that $\Gamma^{p^{r-1}}$ commutes with every
$\tau\in\tilde{G}$, hence $W$ is a $\tilde{G}$-module.  We also have
$W\subseteq \oplus_{j\in J} V_{n_j}$ because $\Gamma^{p^{r-1}}
V_{n_j}=\{0\}$ if $n_j\le p^{r-1}$. On the other hand,
$\Gamma^{p^{r-1}} V_{n_j}$ is spanned by $e_{p^{r-1}+1,j},\,
e_{p^{r-1}+2,j},\,\ldots, e_{n_j,j}$ for $n_j>p^{r-1}$. But $J\neq \emptyset$, so
we get that $W \neq \{0\}$ and in  particular   $e_{n_j,j}\in W$ for
$j\in J$. Hence $W^G$ is spanned $F$-linearly by $\{e_{n_j,j} \mid
j\in J\}$.  Moreover, since every modular action of a $p$-group on a
nonzero module has a non-trivial fixed point, we have
\[\{ 0\} \neq W^{\tilde{G}}\subseteq W^G=\langle \{e_{n_j,j} \mid
j\in J\}\rangle.\] Choose any nonzero vector $v\in W^{\tilde{G}}
\subseteq V^{\tilde{G}}$. As $v$ is in the span of $\{e_{n_j,j} \mid
j\in J\}$, every homogeneous polynomial $f\in
F[V]^{\tilde{G}}\subseteq F[V]^{G}$ of positive degree that is nonzero on $v$ must contain a monomial with nonzero coefficient in the
variables $\{x_{n_{j},j}\mid j\in J\}$. Since $f$ is also
$G$-invariant, Lemma \ref{divide} applies and we get that the
exponents of these variables in this monomial  are all divisible by
$p^r$. It follows $\delta(\tilde{G},V)\ge\epsilon(\tilde{G},v) \ge
p^{r}$ as desired.
\end{proof}

In the following two examples, $F$ is an algebraically closed field of
characteristic~$2$.

\begin{Example}
Consider the dihedral group $D_{2^{r+1}}=\langle \sigma,\rho\rangle$ of order
$2^{r+1}$ with relations $\ord(\sigma)=2$, $\ord(\rho)=2^{r}$ and
$\sigma\rho\sigma^{-1}=\rho^{-1}$. Then $\rho^{2^{r-1}}\in
Z(D_{2^{r+1}})$. Hence the lemma applies, and for every faithful
$D_{2^{r+1}}$-module $V$ we have $\delta(D_{2^{r+1}},V)\ge 2^{r}$.
\end{Example}

\begin{Example}
Consider the quaternion group $Q$ of order $8$. There is an element $\sigma\in
Q$ of order $4$ such that $\sigma^{2}\in Z(Q)$. From the lemma it follows that
for every faithful $Q$-module $V$, we have $\delta(Q,V)\ge 4$.
\end{Example}

\begin{Example}
  Consider
the non-abelian group
\[
T_{p}:=\left\{\left(\begin{array}{cc}
\overline{a}&\overline{b}\\
\overline{0}&\overline{1}
\end{array}\right)\in(\Z/p^{2}\Z)^{2\times 2}\mid a,b\in\Z,\,\,\, a\equiv 1 \mod p\right\}
\]
of order $p^{3}$ (where we write $\overline{a}:=a+p^{2}\Z$). Note
that $T_{2}\cong D_{8}$. The element
$\sigma:=\left(\begin{array}{cc}
\overline{1}&\overline{1}\\
\overline{0}&\overline{1}
\end{array}\right)\in T_{p}$ is of order $p^{2}$, and it can be checked easily
that $\sigma^{p}\in Z(T_{p})$. From the lemma it follows that for every faithful
$T_{p}$-module $V$, we have $\delta(T_{p},V)\ge p^{2}$.
\end{Example}

Recall that that for a group $\tilde{G}$, the exponent
$\exp(\tilde{G})$ of $\tilde{G}$ is the least common multiple of the
orders of its elements. In particular for an abelian group, the
exponent is the maximal order of an element. As a corollary of the
above lemma, we get:

\begin{theorem}\label{abelianpgroup}
Let $\tilde{G}$ be a non-trivial $p$-group. Then for every faithful
$\tilde{G}$-module $V$ we have  \[\delta (\tilde{G},V)\ge \exp(Z(\tilde{G}))\ge p.\]
If  $\tilde{G}$ is an abelian $p$-group, we particularly have \[\delta (\tilde{G},V)\ge \exp(\tilde{G})\ge p.\]
\end{theorem}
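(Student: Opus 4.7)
The plan is to derive this theorem essentially as an immediate corollary of Lemma \ref{lemmapgroup}. The key observation is that if we pick an element of the center whose order realises $\exp(Z(\tilde{G}))$, then the hypothesis of Lemma \ref{lemmapgroup} is automatically satisfied, since every power of a central element is again central.

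More precisely, first I would invoke the standard fact that a non-trivial finite $p$-group has non-trivial center, so that $Z(\tilde{G})$ is a non-trivial abelian $p$-group and $\exp(Z(\tilde{G})) = p^{r}$ for some $r \ge 1$. I would then pick $\sigma \in Z(\tilde{G})$ of order $p^{r}$. Since $\sigma$ is central, $\sigma^{p^{r-1}} \in Z(\tilde{G})$ trivially, so Lemma \ref{lemmapgroup} applies and gives
\[
\delta(\tilde{G}, V) \ge p^{r} = \exp(Z(\tilde{G})) \ge p,
\]
which is the first inequality.

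For the second assertion, if $\tilde{G}$ is abelian then $Z(\tilde{G}) = \tilde{G}$, so $\exp(Z(\tilde{G})) = \exp(\tilde{G})$, and the conclusion follows from the first part.

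There is no real obstacle here: the theorem is a clean packaging of Lemma \ref{lemmapgroup} once one notices that central elements automatically satisfy the centrality condition on $\sigma^{p^{r-1}}$. The only non-trivial input beyond the lemma is the standard class equation argument giving $Z(\tilde{G}) \ne \{1\}$ for a non-trivial $p$-group $\tilde{G}$, which justifies $\exp(Z(\tilde{G})) \ge p$.
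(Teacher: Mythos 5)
Your proposal is correct and follows exactly the same route as the paper: choose a central element $\sigma$ of order $p^{r}=\exp(Z(\tilde{G}))$, observe that the hypothesis $\sigma^{p^{r-1}}\in Z(\tilde{G})$ of Lemma \ref{lemmapgroup} is automatic for a central element, and reduce the abelian case to $\tilde{G}=Z(\tilde{G})$. Nothing is missing.
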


\begin{proof}
First note that for $p$-groups, its center is non-trivial, so particularly we
have
$\exp(Z(\tilde{G}))\ge p$. Now chose an element $\sigma\in Z(\tilde{G})$ of
maximal order $p^{r}=\exp(Z(\tilde{G}))$. Then Lemma \ref{lemmapgroup} applies
and yields $\delta(\tilde{G},V)\ge p^{r}=\exp(Z(\tilde{G}))$. Finally, if $\tilde{G}$
is an abelian $p$-group, we have $\tilde{G}=Z(\tilde{G})$.
\end{proof}

For a recent related study of the invariants of abelian $p$-groups
we refer the reader to \cite{MR3022756}. We also remark that the
inequality in  Theorem \ref{abelianpgroup} is sharp, see
 Theorem \ref{deltasKleinFour} below.

\subsection*{The Klein four group}

Let $\tilde{G}$ denote the Klein four group with generators $\sigma_{1}$ and
$\sigma_{2}$, and $F$ an algebraically closed field of characteristic
$2$. The goal of this section is to compute the $\delta$- and $\gamma$-value of every
$\tilde{G}$-module (in all cases, both numbers are equal here). We first  give the $\delta/\gamma$-value for each
indecomposable representation of the Klein four group.
 The complete list of indecomposable representations is for example
given in \cite[Theorem 4.3.3]{MR1644252}. There, the indecomposable
representations are classified in five types (i)-(v), and we will
use the same enumeration. For the notation of the modules, we follow
\cite{KohlsSezerKleinFour} but note that there types (iv) and (v)
are interchanged.  The first {\bf type (i)} is just the regular
representation $V_{\reg}:=F\tilde{G}$, and here we have
$\delta(\tilde{G},V_{\reg})=\gamma(\tilde{G},V_{\reg})=4=|\tilde{G}|$
by \cite[Theorem 1.1 and Proposition 2.4]{ElmerKohls1}.

The {\bf type (ii)} representations $V_{2m,\lambda}$ are
parameterized by a positive integer $m$ and $\lambda \in F$. Then
$V_{2m,\lambda}$ is defined as the $2m$-dimensional representation
spanned by $e_1,\ldots,e_m,$$h_1,\ldots,h_m$ such that the action is
given by $\sigma_i(e_j)=e_j$, $\sigma_1(h_j)=h_j+e_j$ for $i=1,2$
and $j=1,\ldots,m$, $\sigma_2(h_j)=h_j+\lambda e_j+e_{j+1}$ for
$1\le j<m$ and $\sigma_2(h_m)=h_m+\lambda e_m$. Let $x_1, \dots
,x_m, y_1, \dots y_m$ be the
  elements of $V_{2m,\lambda}^*$ corresponding to $h_1, \dots ,h_m, e_1, \dots e_m$.
Then we have
 $\sigma_i(x_j)=x_j$,
$\sigma_1(y_j)=y_j+x_j$ for $i=1,2$ and
$j=1,\ldots,m$, $\sigma_2(y_1)=y_1+\lambda x_1$ and
$\sigma_2(y_j)=y_j+\lambda x_j+x_{j-1}$ for $1<j\le m$.

\begin{lemma}\label{lemmatypeii}
In the notation as above, we have that $\delta(\tilde{G},V_{2,\lambda})=\gamma(\tilde{G},V_{2,\lambda})$ equals $2$
if $\lambda\in\{0,1\}$, and it equals $4$ if $\lambda\in F\setminus\{0,1\}$.
\end{lemma}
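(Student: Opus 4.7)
My plan is to first identify the fixed point space and reduce the problem. Since $\sigma_1,\sigma_2$ both fix $e_1$ while $\sigma_1(h_1)=h_1+e_1$, the fixed point space is $V_{2,\lambda}^{\tilde G}=F\cdot e_1$; write $e:=e_1$. Any linear form $\alpha x+\beta y$ is $\sigma_1$-invariant only if $\beta=0$, so the degree-$1$ $\tilde G$-invariants are exactly the scalar multiples of $x$, all of which vanish at $e$. On the other hand, any $v\in V\setminus\{0\}$ with $x(v)\ne 0$ is separated by $x$ itself, so the only vectors with $\epsilon(\tilde G,v)>1$ are the nonzero scalar multiples of $e$, which are precisely the nonzero fixed points. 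Hence
\[
\delta(\tilde G,V_{2,\lambda})=\gamma(\tilde G,V_{2,\lambda})=\epsilon(\tilde G,e),
\]
reducing the problem to this single quantity.

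For $\lambda\in\{0,1\}$ I would observe that the representation is not faithful: $\sigma_2$ acts trivially if $\lambda=0$, and $\sigma_1\sigma_2$ acts trivially if $\lambda=1$. In either case $F[V_{2,\lambda}]^{\tilde G}=F[V_{2,\lambda}]^{\langle\sigma_1\rangle}=F[x,\,y^2+xy]$, and the degree-$2$ invariant $y^2+xy$ takes value $1$ at $e$, giving $\epsilon(\tilde G,e)\le 2$; combined with the reduction above this forces equality.

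For $\lambda\in F\setminus\{0,1\}$ the representation is faithful. I would set $u:=y^2+xy$ so that $F[V_{2,\lambda}]^{\langle\sigma_1\rangle}=F[x,u]$, and compute directly that $\sigma_2(u)=u+cx^2$ with $c:=\lambda(\lambda+1)\ne 0$. The $\sigma_2$-norm
\[
N:=u\cdot\sigma_2(u)=u^2+cx^2u
\]
is then a $\tilde G$-invariant of degree $4$ with $N(e)=1$, giving $\epsilon(\tilde G,e)\le 4$. For the matching lower bound I would proceed degree by degree for $d=1,2,3$: list a basis of $\sigma_1$-invariants of degree $d$ in $F[x,u]$ (a handful of monomials in $x$ and $u$), then impose $\sigma_2$-invariance using $\sigma_2(u)=u+cx^2$; because $c\ne 0$, every monomial involving $u$ is forced to have zero coefficient, so the only degree-$d$ $\tilde G$-invariants are scalar multiples of $x^d$, all of which vanish at $e$. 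Combined with $\delta\le|\tilde G|=4$ from Dade's algorithm, this yields $\epsilon(\tilde G,e)=4$.

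The main obstacle will be the lower bound in the faithful case: the general estimate $\delta\ge\exp(Z(\tilde G))=2$ from Theorem~\ref{abelianpgroup} is not enough here, so one must work directly inside the coordinate ring and exploit the nonvanishing of $c=\lambda(\lambda+1)$, which is precisely what distinguishes the faithful case $\lambda\notin\{0,1\}$ from the two degenerate values. The arithmetic in each individual degree is short but must be handled carefully in characteristic two.
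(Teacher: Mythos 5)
Your proof is correct, and it reaches the same two pillars as the paper's (very terse) argument: the reduction of both $\delta$ and $\gamma$ to $\epsilon(\tilde{G},e_{1})$, the observation that for $\lambda\in\{0,1\}$ the image of $\tilde{G}$ in $GL(V_{2,\lambda})$ has order $2$ so that $F[V_{2,\lambda}]^{\tilde{G}}=F[x,y^{2}+xy]$, and the use of the norm $N_{\tilde{G}}(y_{1})=u\cdot\sigma_{2}(u)$ as the degree-$4$ invariant giving the upper bound in the faithful case. Where you genuinely diverge is the lower bound $\epsilon(\tilde{G},e_{1})\ge 4$ for $\lambda\notin\{0,1\}$: the paper gets it structurally, by noting that $x_{1}$ and $N_{\tilde{G}}(y_{1})$ form an hsop whose degrees multiply to $|\tilde{G}|=4$, so that by \cite[Theorem 3.7.5]{DerksenKemper} the invariant ring is exactly the polynomial ring $F[x_{1},N_{\tilde{G}}(y_{1})]$, whence any invariant not vanishing at $e_{1}$ must involve the degree-$4$ generator. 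You instead work degree by degree inside $F[x,u]=F[V_{2,\lambda}]^{\langle\sigma_{1}\rangle}$ and use $\sigma_{2}(u)=u+\lambda(\lambda+1)x^{2}$ with $\lambda(\lambda+1)\ne 0$ to kill every candidate invariant of degree $2$ or $3$ that could be nonzero at $e_{1}$. Your route is more elementary and self-contained (no appeal to the hsop-degree-product criterion), and it isolates exactly where faithfulness enters, namely the nonvanishing of $\lambda(\lambda+1)$; the paper's route is shorter and yields the stronger byproduct that the full invariant ring is polynomial. Both arguments are complete and correct.
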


\begin{proof}
If $\lambda\in\{0,1\}$, the corresponding matrix group is of order $2$, and
the result follows easily. If $\lambda\in F\setminus\{0,1\}$ it follows from \cite[Theorem 3.7.5]{DerksenKemper} that $F[V_{2,\lambda}]^{\tilde{G}}$ is  generated
by $x_{1}$ and the norm $N_{\tilde{G}}(y_{1})$, as those two invariants form
an hsop and the product of their degrees equals the group order $4$. Now the
claim follows easily.
\end{proof}

\begin{proposition}\label{typeii}
In the notation as above, we have $\delta(\tilde{G},V_{2m,\lambda})=\gamma(\tilde{G},V_{2m,\lambda})=4$ for all
$m\ge 2$ and $\lambda\in F$.
\end{proposition}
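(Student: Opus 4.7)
The upper bound $\gamma(\tilde{G}, V_{2m,\lambda}) \leq |\tilde{G}| = 4$ is automatic from Dade's algorithm, so the real content of the proposition is the lower bound $\delta(\tilde{G}, V_{2m,\lambda}) \geq 4$. I will focus on the ``last'' fixed point $v := e_m \in V_{2m,\lambda}^{\tilde{G}}$ and prove $\epsilon(\tilde{G}, e_m) \geq 4$.

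First I reduce to the case $m = 2$. The subspace $U := \langle e_{m-1}, e_m, h_{m-1}, h_m \rangle$ is a $\tilde{G}$-submodule of $V_{2m,\lambda}$: the formulas for $\sigma_1$ and $\sigma_2$ manifestly preserve it, and a direct comparison with the defining relations of $V_{4,\lambda}$ shows $U \cong V_{4,\lambda}$ as $\tilde{G}$-modules (this is exactly where the hypothesis $m\ge 2$ is used, so that $h_{m-1}$ exists and $\sigma_2(h_{m-1})$ involves $e_m$). Restriction of polynomial functions along $U \hookrightarrow V_{2m,\lambda}$ gives a degree-preserving, $\tilde{G}$-equivariant surjection $F[V_{2m,\lambda}] \twoheadrightarrow F[U]$, so any homogeneous $f \in F[V_{2m,\lambda}]^{\tilde{G}}$ with $f(e_m) \neq 0$ restricts to a homogeneous invariant of the same degree in $F[U]^{\tilde{G}}$ that still does not vanish at $e_m$. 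Thus the minimum separating degree on $V_{2m,\lambda}$ is at least the corresponding minimum on $U \cong V_{4,\lambda}$, reducing the problem to $m=2$.

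For $V_{4,\lambda}$, evaluation at $e_2$ sends $y_2 \mapsto 1$ and all other variables to $0$, so any $\tilde{G}$-invariant $f$ with $f(e_2) \neq 0$ must contain $y_2^{\deg f}$ with nonzero coefficient. Viewed as a $\langle \sigma_1 \rangle \cong Z_2$-module, $V_{4,\lambda}$ splits as $V_2 \oplus V_2$ with terminal variables $y_1, y_2$; Lemma \ref{divide} therefore forces $\deg f$ to be even, so it suffices to rule out $\deg f = 2$.

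Suppose, toward a contradiction, that $f$ is a degree-$2$ $\tilde{G}$-invariant on $V_{4,\lambda}$ whose $y_2^2$-coefficient $c$ is nonzero. Reading off the coefficient of $x_2^2$ in $\sigma_1(f)-f$ shows that $\sigma_1$-invariance forces the $y_2 x_2$-coefficient of $f$ to equal $c$. Turning to $\sigma_2(f)-f$, a short check confirms that only the monomials $y_2^2$ and $y_2 x_2$ of $f$ contribute to its $x_2^2$-coefficient, yielding the equation $\lambda(\lambda+1)c = 0$; for $\lambda \notin \{0,1\}$ this already gives $c = 0$. The main obstacle is the remaining cases $\lambda \in \{0, 1\}$, where this single equation degenerates in characteristic $2$; one has to extract a second equation from the $x_1 x_2$-coefficient of $\sigma_2(f)-f$ and combine it with the $\sigma_1$-constraint that the $y_1 x_2$- and $y_2 x_1$-coefficients of $f$ agree (obtained from the $x_1 x_2$-coefficient of $\sigma_1(f)-f$) to force $c=0$ again. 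The bookkeeping is mechanical but requires juggling several $\sigma_1$- and $\sigma_2$-constraints simultaneously.
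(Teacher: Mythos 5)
Your overall strategy matches the paper's: the upper bound from Dade's algorithm, the observation that a separating invariant must contain $y_m^{\deg f}$, the use of Lemma \ref{divide} for the $\langle\sigma_1\rangle$-action to force $\deg f$ even, and the coefficient comparison at $x_2^2$ that settles $\lambda\notin\{0,1\}$ are all exactly the paper's argument (packaged in the paper as Lemma \ref{Klein}). Your reduction to $m=2$ via the submodule $U=\langle e_{m-1},e_m,h_{m-1},h_m\rangle\cong V_{4,\lambda}$ is correct but not needed; the paper runs the same computation directly in $V_{2m,\lambda}$ using the variables $x_{m-1},x_m,y_m$.

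The genuine gap is in the cases $\lambda\in\{0,1\}$, which you correctly identify as the crux but do not actually carry out, and the route you sketch does not close for $\lambda=1$. The $\sigma_1$-constraint coming from the $x_1x_2$-coefficient of $\sigma_1(f)-f$ is \emph{not} that the $y_1x_2$- and $y_2x_1$-coefficients of $f$ agree: the monomial $y_1y_2$ also lies above $x_1x_2$ with respect to $\Delta_1$, so the constraint is $\alpha+\beta+\gamma=0$, where $\alpha,\beta,\gamma$ are the coefficients of $y_2x_1$, $y_1x_2$, $y_1y_2$. For $\lambda=1$ the $x_1x_2$-coefficient of $\sigma_2(f)-f$ is $c+\alpha+\beta+\gamma$, so the correct pair of equations does give $c=0$; but combining your stated constraint $\alpha=\beta$ with that equation yields only $c=\gamma$, which is not a contradiction. (For $\lambda=0$ your plan is over-engineered: the $x_1x_2$-coefficient of $\sigma_2(f)-f$ is $c$ alone, since $y_2x_2$ is the only monomial above $x_1x_2$ with respect to $\Delta_2$, so Lemma \ref{first} finishes immediately.) The paper sidesteps the $\lambda=1$ bookkeeping entirely by noting that in characteristic $2$ the matrix groups for $\lambda=1$ and $\lambda=0$ coincide (replace $\sigma_2$ by $\sigma_1\sigma_2$), hence have the same invariant ring, so only $\lambda=0$ needs to be checked. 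You should either adopt that observation or redo the $\lambda=1$ computation with the corrected $\sigma_1$-constraint.
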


\begin{proof}
We have $\delta(\tilde{G},V_{2m,\lambda})\le\gamma(\tilde{G},V_{2m,\lambda})\le 4$, from Dade's hsop algorithm,
hence it is enough to show $\delta(\tilde{G},V_{2m,\lambda})\ge 4$.
Consider the point $e_{m}\in V_{2m,\lambda}^{\tilde{G}}\setminus\{0\}$. Any homogeneous invariant $f\in
F[V_{2m,\lambda}]^{\tilde{G}}_{d}$ of positive degree $d$ separating $e_{m}$ from
zero must contain $y_{m}^{d}$. Lemma \ref{Klein} implies $d\ge 4$, so
$\delta(\tilde{G},V_{2m,\lambda})\ge \epsilon(\tilde{G},e_{m})\ge 4$,
finishing the proof.
\end{proof}

Set $\Delta_i=\sigma_i-1$ for $i=1,2$. Since $\Delta_i (f)=0$ for
every polynomial $f\in F[V]^{\tilde{G}}$, the assertion of Lemma 1
holds for $\Delta=\Delta_i$ for $i=1,2$. We say that a monomial $M$
lies above the monomial $M'$ with respect to $\Delta_i$ if $M'$
appears in $\Delta_i(M)$.

\begin{lemma}\label{Klein}
Assume that $V=V_{2m,\lambda}$ with $m\ge 2$. Then
$y_m^d$ does not appear in a polynomial in $F[V]^{\tilde{G}}$ for
$1\le d\le 3$.
\end{lemma}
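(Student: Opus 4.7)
The plan is to argue by contradiction, assuming $y_m^d$ appears with nonzero coefficient in some $f\in F[V]^{\tilde{G}}$ for some $1\le d\le 3$, and to exploit both generators $\sigma_1$ and $\sigma_2$ via Lemmas~\ref{first} and~\ref{divide}.

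For the odd cases $d=1$ and $d=3$, the cyclic subgroup $G=\langle\sigma_1\rangle\cong Z_2$ alone suffices. As a $G$-module, $V_{2m,\lambda}$ decomposes as $V_2^{\oplus m}$ with summands $\operatorname{span}(h_j,e_j)$, and since $e_j$ is the $\sigma_1$-fixed vector, its dual $y_j$ plays the role of the terminal variable of the $j$-th block in the notation of Section~1. The monomial $y_m^d$ thus consists solely of a terminal variable, so Lemma~\ref{divide} applied with $p=2$, $r=1$, and $s=1$ (so that $J_1=\{1,\ldots,m\}$) forces $2\mid d$, immediately ruling out $d\in\{1,3\}$.

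For the remaining case $d=2$, one cyclic subgroup no longer suffices and both $\sigma_1$- and $\sigma_2$-invariance are needed. Normalizing so that the coefficient $c_{y_m^2}$ of $y_m^2$ in $f$ equals $1$, I would first extract three relations from $\Delta_1(f)=0$: comparing the coefficient of $x_m^2$ forces $c_{y_mx_m}=1$ (only $y_m^2$ and $y_mx_m$ of degree~$2$ have $x_m^2$ in their $\Delta_1$-image); applying Lemma~\ref{first} to $\Delta_1$ with target $y_{m-1}x_m$ forces $c_{y_{m-1}y_m}=0$ (no other degree-$2$ monomial has $y_{m-1}x_m$ in its $\Delta_1$-image); and the $x_{m-1}x_m$ coefficient in $\Delta_1(f)$ then yields $c_{y_{m-1}x_m}=c_{y_mx_{m-1}}$. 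Next I would compute the coefficient of $x_{m-1}x_m$ in $\Delta_2(f)$: the only contributing degree-$2$ monomials turn out to be $y_{m-1}y_m$, $y_{m-1}x_m$, $y_mx_{m-1}$, and $y_mx_m$, giving the total
\[
\lambda^2 c_{y_{m-1}y_m}+\lambda\bigl(c_{y_{m-1}x_m}+c_{y_mx_{m-1}}\bigr)+c_{y_mx_m}.
\]
Plugging in the $\sigma_1$-relations collapses this to $c_{y_mx_m}=1$ in characteristic~$2$, which must vanish since $\Delta_2(f)=0$, and the contradiction follows.

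The main obstacle is the careful bookkeeping: for each chosen target monomial (here $x_m^2$, $y_{m-1}x_m$, or $x_{m-1}x_m$), one must enumerate \emph{all} degree-$2$ monomials whose $\Delta_1$- or $\Delta_2$-image contains it, and verify that no additional source has been overlooked. Once these enumerations are pinned down, the characteristic-$2$ cancellations used above fall out cleanly.
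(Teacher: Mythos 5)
Your proof is correct, and while it uses the same toolkit as the paper (contradiction, Lemma~\ref{divide} for the parity of $d$, and coefficient chasing with $\Delta_1$ and $\Delta_2$ via Lemma~\ref{first}), the endgame for $d=2$ is organized genuinely differently. The first two steps coincide with the paper's: Lemma~\ref{divide} applied to the $\langle\sigma_1\rangle$-decomposition into two-dimensional blocks rules out $d\in\{1,3\}$, and comparing the $x_m^2$-coefficient of $\Delta_1(f)$ forces $c_{y_mx_m}=c_{y_m^2}$. From there the paper compares the $x_m^2$-coefficient of $\Delta_2(f)$, which equals $c(\lambda^2+\lambda)$ and is only nonzero for $\lambda\notin\{0,1\}$; it then treats $\lambda=0$ separately via Lemma~\ref{first} with target $x_{m-1}x_m$ (where $y_mx_m$ is the unique monomial lying above it when $\lambda=0$), and disposes of $\lambda=1$ by the observation that $\lambda=0$ and $\lambda=1$ define the same matrix group. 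You instead extract two further relations from $\Delta_1(f)=0$, namely $c_{y_{m-1}y_m}=0$ and $c_{y_{m-1}x_m}=c_{y_mx_{m-1}}$, which make the $x_{m-1}x_m$-coefficient of $\Delta_2(f)$ collapse to $c_{y_mx_m}=1$ uniformly in $\lambda$; a single computation thus replaces the paper's three-way case split and the change-of-generators trick. Your enumerations of the monomials lying above $x_m^2$, $y_{m-1}x_m$ and $x_{m-1}x_m$ are all correct, including the facts that $y_m^2$ contributes nothing to $x_{m-1}x_m$ under $\Delta_2$ (its cross term carries a factor of $2$) and that the boundary case $m=2$, where $\Delta_2(y_1)=\lambda x_1$ has no $x_0$-term, causes no change. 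The only implicit reduction worth stating explicitly is the passage to the degree-$2$ homogeneous component of $f$, which is harmless since the action preserves degrees; the paper makes the same tacit assumption.
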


\begin{proof}
Assume  that $y_m^d$ appears in $f\in F[V]^{\tilde{G}}$. Since
$\{x_m, y_m\}$ spans a two dimensional indecomposable summand as a
$\langle \sigma_1\rangle$-module, Lemma \ref{divide} applies and we
get that $d$ is divisible by $2$. Assume on the contrary that $d=2$.
Then $\sigma_1(y_m^2)=y_m^2+x_m^2$. So $x_m^2$ appears in
$\Delta_1(y_m^2)$. Since $y_{m}x_{m}$ is the only other monomial in
$F[V]$ that lies above $x_m^2$ with respect to $\Delta_1$ we get
that $y_{m}x_{m}$ appears in $f$ as well. Moreover since the
coefficient of $x_m^2$ in $\Delta_1(y_m^2)$ and
$\Delta_1(y_{m}x_{m})$ is one, it follows that the coefficients of
$y_m^2$ and $y_{m}x_{m}$ in $f$ are equal. Call this nonzero
coefficient $c$. Then the coefficient of $x_m^2$ in
$\Delta_2(cy_m^2+cy_{m}x_{m})$ is $c(\lambda^2+\lambda)$.
 Since
$y_m^2$ and $y_mx_m$ are the only monomials in $F[V]$ that lie
above $x_m^2$ with respect to $\Delta_2$, we get that
$\Delta_2(f)\neq 0$ if $\lambda\neq 0,1$, giving a contradiction.
Next assume that $\lambda=0$. Then, since $y_mx_m$ appears in $f$
and $\sigma_2(y_mx_m)=(y_m+x_{m-1})x_m$ we get that $x_{m-1}x_m$
appears in $\Delta_2 (y_mx_m)$. This gives a contradiction by
Lemma \ref{first} again because $y_mx_m$ is the only monomial that
lies above $x_{m-1}x_{m}$ with respect to $\Delta_2$. Finally, we
note that the cases $\lambda=1$ and $\lambda=0$ correspond to the
same matrix group and so their invariants are the same.
\end{proof}

The {\bf type (iii)} representations $W_{2m}$ are $2m$-dimensional
representations ($m\ge 1$) which are obtained from $V_{2n,0}$ just
by interchanging the actions of $\sigma_{1}$ and $\sigma_{2}$. In
particular, $W_{2m}$ and $V_{2n,0}$ have the same invariant ring, so
we get as a corollary from Lemma \ref{lemmatypeii} and Proposition
\ref{typeii} that
$\delta(\tilde{G},W_{2})=\gamma(\tilde{G},W_{2})=2$ and
$\delta(\tilde{G},W_{2m})=\gamma(\tilde{G},W_{2m})=4$ for all $m\ge
2$.

The {\bf type (iv)} representations $V_{2m+1}$ for $m\ge 1$ are
$2m+1$-dimensional representations. (Note that in
\cite{KohlsSezerKleinFour}, these representations are listed as type
(v).) They are linearly spanned by
$e_{1},\ldots,e_{m},h_{1},\ldots,h_{m+1}$, where
$\sigma_{i}(e_{j})=e_{j}$ for $i=1,2$ and $1\le j\le m$,
$\sigma_{1}(h_{i})=h_{i}+e_{i}$ for $1\le i\le m$,
$\sigma_{1}(h_{m+1})=h_{m+1}$, $\sigma_{2}(h_{1})=h_{1}$, and
$\sigma_{2}(h_{i})=h_{i}+e_{i-1}$ for $2\le i\le m+1$. Let
$x_{1},\ldots,x_{m+1},y_{1},\ldots,y_{m}$ be the elements of
$V_{2m+1}^{*}$ corresponding to
$h_{1},\ldots,h_{m+1},e_{1},\ldots,e_{m}$. Then we have
$\sigma_{i}(x_{j})=x_{j}$ for $i=1,2$ and $1\le j\le m+1$,
$\sigma_{1}(y_{j})=y_{j}+x_{j}$ and
$\sigma_{2}(y_{j})=y_{j}+x_{j+1}$ for $1\le j\le m$.

\begin{proposition}
We have $\delta(\tilde{G},V_{2m+1})=\gamma(\tilde{G},V_{2m+1})=4$ for all $m\ge 1$.
\end{proposition}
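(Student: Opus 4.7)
The plan is to follow the template of Proposition \ref{typeii}. The upper bound $\delta(\tilde{G}, V_{2m+1}) \le \gamma(\tilde{G}, V_{2m+1}) \le 4 = |\tilde{G}|$ is immediate from Dade's hsop algorithm. For the lower bound I select the fixed point $v = e_m \in V_{2m+1}^{\tilde{G}} \setminus \{0\}$. Since the dual basis satisfies $y_m(e_m) = 1$ while all other dual basis vectors vanish on $e_m$, any homogeneous invariant $f$ of positive degree $d$ with $f(e_m) \ne 0$ must contain the monomial $y_m^d$ with nonzero coefficient.

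It therefore suffices to establish a lemma analogous to Lemma \ref{Klein}, namely that $y_m^d$ cannot appear in any element of $F[V_{2m+1}]^{\tilde{G}}$ for $1 \le d \le 3$; once this is in hand, the conclusion $\delta(\tilde{G}, V_{2m+1}) \ge \epsilon(\tilde{G}, e_m) \ge 4$ follows. For the lemma, note first that $\{x_m, y_m\}$ spans a two-dimensional indecomposable summand of $V_{2m+1}^{*}$ as a $\langle \sigma_1 \rangle$-module with $y_m$ terminal, so Lemma \ref{divide} forces $2 \mid d$; this rules out $d = 1$ and $d = 3$.

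For $d = 2$, suppose $y_m^2$ appears in $f$ with coefficient $c \ne 0$. The plan is a two-step cascade. First, $\Delta_2(y_m^2) = x_{m+1}^2$, and since $\sigma_2$ sends $y_j \mapsto y_j + x_{j+1}$, a direct enumeration of degree-$2$ monomials shows that $y_m^2$ and $y_m x_{m+1}$ are the only monomials in $F[V_{2m+1}]$ lying above $x_{m+1}^2$ with respect to $\Delta_2$; so by Lemma \ref{first}, $y_m x_{m+1}$ must also appear in $f$, and comparing coefficients forces its coefficient to be $c$. Second, $\Delta_1(y_m x_{m+1}) = x_m x_{m+1}$, and the crucial observation is that $y_m x_{m+1}$ is the \emph{only} monomial in $F[V_{2m+1}]$ that lies above $x_m x_{m+1}$ with respect to $\Delta_1$: the natural competitor $y_m y_{m+1}$ simply does not exist, since in type (iv) the dual variables $y_j$ are indexed only for $1 \le j \le m$. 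Thus the coefficient of $x_m x_{m+1}$ in $\Delta_1(f)$ equals $c \ne 0$, contradicting $\Delta_1(f) = 0$.

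The main obstacle is the $d = 2$ case: Lemma \ref{divide} alone only forces $2 \mid d$ and not $4 \mid d$, so the refined propagation through $\Delta_2$ and then $\Delta_1$ via Lemma \ref{first} is essential. What makes the argument work uniformly for all $m \ge 1$ is the structural asymmetry of type (iv) modules, namely that the dual space carries $m+1$ variables of type $x$ but only $m$ variables of type $y$, so $y_m x_{m+1}$ has no partner $y_m y_{m+1}$ available to help cancel $x_m x_{m+1}$ under $\Delta_1$.
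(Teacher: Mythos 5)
Your proof is correct, and its skeleton (the point $e_{m}$, forcing $y_{m}^{d}$ to appear, and invoking Lemma \ref{divide} via the $\langle\sigma_{1}\rangle$-structure to get $2\mid d$) coincides with the paper's. The one place you diverge is the exclusion of $d=2$: the paper simply cites an external result (Proposition 5.8.3 of the Sezer--Shank reference) asserting that $y_{m}^{2}$ appears in no $\tilde{G}$-invariant, whereas you prove this from scratch by a two-step cascade in the style of Lemma \ref{Klein}: first $\Delta_{2}(y_{m}^{2})=x_{m+1}^{2}$ forces $y_{m}x_{m+1}$ into $f$ with the same coefficient $c$ (by Lemma \ref{first}, since $y_{m}^{2}$ and $y_{m}x_{m+1}$ are the only monomials above $x_{m+1}^{2}$ for $\Delta_{2}$), and then $\Delta_{1}(y_{m}x_{m+1})=x_{m}x_{m+1}$ yields a contradiction because no other monomial lies above $x_{m}x_{m+1}$ for $\Delta_{1}$ --- precisely because $y_{m+1}$ does not exist in the type (iv) module. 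I checked the two uniqueness claims and the coefficient bookkeeping; both are right, including for $m=1$. What your version buys is self-containedness and an explicit exhibition of the structural asymmetry ($m+1$ variables $x_{j}$ versus $m$ variables $y_{j}$) that makes type (iv) behave like type (ii); what the paper's version buys is brevity at the cost of an external citation.
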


\begin{proof}
Again by Dade's hsop-algorithm, we have
$\delta(\tilde{G},V_{2m+1})\le\gamma(\tilde{G},V_{2m+1})\le 4$.
Consider the point $e_{m}\in V_{2m+1}^{\tilde{G}}$, and let $f\in
F[V_{2m+1}]^{\tilde{G}}$ be homogeneous of minimal positive degree
$d$ such that $f(e_{m})\ne 0$. Then $y_{m}^{d}$ must appear in $f$
with a nonzero coefficient. Since $\{x_m, y_m\}$ spans a two
dimensional indecomposable summand as a $\langle
\sigma_1\rangle$-module and $f$ is also $\langle
\sigma_1\rangle$-invariant,  Lemma \ref{divide} applies and we get
that $d$ is divisible by $2$. By \cite[Proposition
5.8.3]{SezerShank}, $y_{m}^{2}$ does not appear in a
$\tilde{G}$-invariant polynomial. It follows $d\ge 4$, so we are
done.
\end{proof}

The {\bf type (v)} representations $W_{2m+1}$ for $m\ge 1$ are $2m+1$-dimensional
representations. (Note that in \cite{KohlsSezerKleinFour}, these
representations are given type (iv).)
They are afforded by $\sigma_{1}\mapsto
\left(\begin{array}{c|c} I_{m+1}&  \begin{array}{c} I_{m} \\\hline
0_{1\times m}\end{array}  \\ \hline 0& I_{m}
  \end{array}\right)
$ and $\sigma_{2}\mapsto \left(\begin{array}{c|c} I_{m+1}&  \begin{array}{c} 0_{1\times m} \\\hline
I_{m}\end{array}  \\ \hline 0& I_{m}
  \end{array}\right)$, where $0_{k\times l}$ denotes a $k\times l$ matrix
whose entries are all zero. In \cite[section 4]{SezerShank} (with notation $F[W_{2m+1}]=:F[y_{1},\ldots,y_{m+1},x_{1},\ldots,x_{m}]$), an hsop
consisting of invariants of degree at most $2$ is given for
$F[W_{2m+1}]^{\tilde{G}}$. As the $\delta$-value is clearly not one, it
follows $\delta(\tilde{G},W_{2m+1})=\gamma(\tilde{G},W_{2m+1})=2$ for all $m\ge 1$.

\begin{theorem}\label{deltasKleinFour}
Let $V$ be a non-trivial representation of the Klein four group
$\tilde{G}$ over an algebraically closed field of characteristic $2$, and consider its
decomposition into indecomposable summands. Then
$\delta(\tilde{G},V)=\gamma(\tilde{G},V)=2$ if and only if every
non-trivial indecomposable summand is isomorphic to one of
$V_{2,0}$, $V_{2,1}$, $W_{2}$ or $W_{2m+1}$ $(m\ge 1)$. If another
non-trivial indecomposable summand appears, then
$\delta(\tilde{G},V)=\gamma(\tilde{G},V)=4$.
\end{theorem}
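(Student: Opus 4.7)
The plan is to reduce the claim for an arbitrary $\tilde{G}$-module $V$ to the values already computed for each indecomposable summand. The key observation is that both quantities behave additively as a maximum under direct sums, namely
\[
\delta(\tilde{G},U\oplus W)=\max\{\delta(\tilde{G},U),\delta(\tilde{G},W)\}
\quad\text{and}\quad
\gamma(\tilde{G},U\oplus W)=\max\{\gamma(\tilde{G},U),\gamma(\tilde{G},W)\}.
\]
For $\gamma$ this is Proposition~3.3 of \cite{ElmerKohls1}. For $\delta$ a short direct argument suffices: given a fixed point $v=v_{U}+v_{W}$ with, say, $v_{U}\ne 0$, any $f\in F[U]^{\tilde{G}}_{+}$ separating $v_{U}$ lifts to an invariant of the same degree on $U\oplus W$ with $f(v)=f(v_{U})\ne 0$; conversely, for a fixed point lying entirely in $U^{\tilde{G}}\subseteq(U\oplus W)^{\tilde{G}}$, decomposing an arbitrary $f\in F[U\oplus W]^{\tilde{G}}$ by its $W$-degree shows that its degree-zero piece in the $W$-variables is already $\tilde{G}$-invariant and takes the same value on $v_{U}$, so enlarging the module cannot lower the separating degree.

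With this max-formula in hand, it is enough to tabulate the $\delta/\gamma$-values of all indecomposable summands. From Lemma~\ref{lemmatypeii}, Proposition~\ref{typeii} and the statements proved for types (i), (iii), (iv) and (v) immediately preceding the theorem, the non-trivial indecomposables split into two classes: the modules $V_{2,0}$, $V_{2,1}$, $W_{2}$ and $W_{2m+1}$ ($m\ge 1$) all have $\delta=\gamma=2$, while the remaining non-trivial indecomposables, namely $V_{\reg}$, the modules $V_{2,\lambda}$ for $\lambda\in F\setminus\{0,1\}$, $V_{2m,\lambda}$ for $m\ge 2$, $W_{2m}$ for $m\ge 2$ and $V_{2m+1}$ for $m\ge 1$, all have $\delta=\gamma=4$. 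Trivial one-dimensional summands contribute $\delta=\gamma=1$ and therefore do not affect the maximum as soon as at least one non-trivial summand is present.

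Combining the two ingredients yields the claimed equivalence in both directions: the maximum over the indecomposable summands of $V$ equals $2$ precisely when every non-trivial summand lies in the first list, and it jumps to $4$ as soon as one summand from the second list occurs. The upper bound $\delta(\tilde{G},V)\le\gamma(\tilde{G},V)\le 4$ holds unconditionally by Dade's algorithm, and the lower bounds are inherited from the indecomposable cases via the max formula. I do not anticipate a substantive obstacle: all the hard work is done in the preceding case-by-case analysis, and only the routine direct-sum observation for $\delta$ needs to be spelled out explicitly.
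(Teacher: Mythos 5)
Your proposal is correct and follows essentially the same route as the paper: the paper likewise reduces to the indecomposable summands via the fact that the $\delta$- and $\gamma$-values of a direct sum equal the maximum over the summands (citing \cite[Proposition 2.2/Proposition 3.3]{ElmerKohls2} rather than reproving it), and then reads off the answer from the case-by-case values established earlier. Your additional direct argument for the $\delta$ max-formula is sound but not needed beyond the citation.
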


\begin{proof}
The $\delta/\gamma$-value of a direct sum equals the maximal
$\delta/\gamma$-value of a summand (see \cite[Proposition
2.2/Proposition 3.3]{ElmerKohls2}), so the theorem follows from the
values for the indecomposable modules above.
\end{proof}

\section{Groups of order with simple prime factor $p$}

In this section we first note that $\delta(G,V)$ can only take the values
$0$, $1$ or $p$ if $G$ is a group of order $pm$, where $m$ is
relatively prime to $p$. Then we demonstrate how the precise value is
determined by the fixed point spaces of $V$ and $V^{*}$.

\begin{lemma}\label{deltapm}
 Let $G$ be a group of order $|G|=pm$ such that $p,m$ are coprime. Then for
a $G$-module $V$, we have $\delta(G,V)\in\{0,1,p\}$.
\end{lemma}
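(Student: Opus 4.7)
The plan is to reduce the question to the modular cyclic case already treated in Section~1, via the relative transfer from a Sylow $p$-subgroup. If $V^G=\{0\}$ then $\delta(G,V)=0$ and we are done, so assume $V^G\neq\{0\}$ and pick an arbitrary $v\in V^G\setminus\{0\}$. The goal is to show $\epsilon(G,v)\in\{1,p\}$; since $\delta(G,V)$ is the supremum of these values over nonzero fixed points, the lemma will follow immediately.

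Let $P\leq G$ be a Sylow $p$-subgroup. Since $\gcd(p,m)=1$ we have $|P|=p$, hence $P\cong Z_p$, and $v\in V^G\subseteq V^P$. Applying Theorem~3 (with $r=1$) to the cyclic $p$-group $P$ gives $\epsilon(P,v)\in\{p^0,p^1\}=\{1,p\}$. It therefore suffices to prove the equality $\epsilon(G,v)=\epsilon(P,v)$.

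The inclusion $F[V]^G\subseteq F[V]^P$ gives the trivial direction $\epsilon(P,v)\leq\epsilon(G,v)$. For the reverse inequality I would invoke the relative transfer $\tra^G_P\colon F[V]^P\to F[V]^G$ defined by $\tra^G_P(f)=\sum_{\sigma P\in G/P}\sigma(f)$, which is well-defined on $P$-invariants, preserves homogeneity, and has image in $F[V]^G$. The key computation is that for $v\in V^G$ and $f\in F[V]^P$, one has $(\tra^G_P f)(v)=\sum_{\sigma P\in G/P}f(\sigma^{-1}v)=[G:P]\,f(v)=m\,f(v)$, which is nonzero whenever $f(v)\neq 0$ because $m$ is invertible in $F$ (as $p\nmid m$). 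Hence any homogeneous $P$-invariant of degree $d$ separating $v$ from zero produces, via the transfer, a homogeneous $G$-invariant of the same degree $d$ separating $v$ from zero, giving $\epsilon(G,v)\leq\epsilon(P,v)$.

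Combining both inequalities yields $\epsilon(G,v)=\epsilon(P,v)\in\{1,p\}$, and hence $\delta(G,V)\in\{0,1,p\}$ as claimed. There is no serious obstacle: the only conceptual ingredient is the observation that the relative transfer rescales values at $G$-fixed points by the invertible factor $[G:P]=m$, so it preserves nonvanishing at $v$; once this is in place the statement reduces cleanly to the cyclic result of Section~1, and no structural information about $G$ beyond the existence of a Sylow $p$-subgroup of order $p$ is used.
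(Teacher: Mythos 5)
Your proof is correct, and it takes a genuinely different route from the paper's. The paper disposes of the lemma in two citations: by a result of Nagata and Miyata (quoted from Elmer--Kohls) the value $\delta(G,V)$ is $0$, $1$, or divisible by $p$, and by the main theorem of Elmer--Kohls it is bounded above by $\delta(G)=|P|=p$, where $P$ is a Sylow $p$-subgroup; nothing else is said. You instead give an essentially self-contained argument: the relative transfer $\tra^{G}_{P}\colon F[V]^{P}\to F[V]^{G}$ is well defined on $P$-invariants (since $\sigma(f)$ depends only on the coset $\sigma P$), preserves degree, and at a $G$-fixed point $v$ satisfies $(\tra^{G}_{P}f)(v)=[G:P]\,f(v)=m\,f(v)$ with $m$ invertible in $F$ because $p\nmid m$; together with $F[V]^{G}\subseteq F[V]^{P}$ this gives $\epsilon(G,v)=\epsilon(P,v)$ for every $v\in V^{G}\setminus\{0\}$, and the cyclic theorem of Section 1 applied to $P\cong Z_{p}$ (the case $r=1$, where $s\in\{0,1\}$ according to whether or not $v$ has a nonzero coefficient on a trivial $P$-summand) forces $\epsilon(P,v)\in\{1,p\}$. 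All of these steps check out. Your approach buys more than the lemma asserts: it identifies $\epsilon(G,v)$ with $\epsilon(P,v)$ pointwise and hence determines which of the values $1$ or $p$ actually occurs in terms of the $P$-module structure of $V$, information the paper recovers separately afterwards via the subvariety $V_{0}$. The price is that you reprove, in the special case needed here, the content of the two cited results; indeed the transfer computation at fixed points is exactly the mechanism underlying the Nagata--Miyata statement. One cosmetic slip: the cyclic result you invoke is the theorem following Lemma 3 (numbered Theorem 4 in the paper), not ``Theorem 3,'' but this does not affect the argument.
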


\begin{proof}
By \cite[Corollary 2.2]{ElmerKohls2} (which is essentially a reformulation of a
result of Nagata an Miyata \cite{NagataMiyata}), $\delta(G,V)$ is $0$, $1$, or divisible
by $p$. As $\delta(G)$ is the size of a sylow-$p$-subgroup of $G$ by
\cite[Theorem 1,1]{ElmerKohls1}, we also have $\delta(G,V)\le\delta(G)=p$. It
follows $\delta(G,V)\in\{0,1,p\}$.
\end{proof}

For any $G$-module $V$, define
\[
V_{0}:=\{v \in V\mid f(v)=0 \text{ for all } f \in
F[V]^{G}_{1}=(V^{*})^{G}\}=\mathcal{V}((V^{*})^{G}).
\]
Clearly, $V_{0}$ is a $G$-submodule of $V$, because if $v\in V_{0}$, $\sigma\in G$ and
$f\in F[V]^{G}_{1}$ we  have $f(\sigma v)=f(v)=0$, hence $\sigma v\in
V_{0}$.

\begin{lemma}\label{deltaeq1}
For a $G$-module $V$, we have
\[
\delta(G,V)=1 \quad\Leftrightarrow\quad V^{G}\ne\{0\} \text{ and } V^{G}\cap V_{0}=\{0\}.
\]
\end{lemma}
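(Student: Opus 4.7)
The plan is to unfold the definitions on both sides of the stated equivalence; the content is essentially tautological once one notes that $F[V]^{G}_{1}=(V^{*})^{G}$ and that $V_{0}=\mathcal{V}((V^{*})^{G})$ records precisely the points on which every linear invariant vanishes.

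For the forward direction, I would start from $\delta(G,V)=1$. By convention $\delta(G,V)=0$ whenever $V^{G}=\{0\}$, so the assumption $\delta(G,V)=1$ already forces $V^{G}\ne\{0\}$. Now pick any $v\in V^{G}\setminus\{0\}$. The definition of $\delta$ as a supremum of $\epsilon(G,v)$ over nonzero fixed points gives $\epsilon(G,v)\le 1$, and since the degree $d$ in the definition of $\epsilon$ is required to be positive we get $\epsilon(G,v)=1$. Hence there exists $f\in F[V]^{G}_{1}$ with $f(v)\ne 0$. Interpreting $f$ as an element of $(V^{*})^{G}$, this means $v\notin V_{0}$, so $V^{G}\cap V_{0}=\{0\}$.

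For the reverse direction, I would assume $V^{G}\ne\{0\}$ and $V^{G}\cap V_{0}=\{0\}$. Since $V^{G}\ne\{0\}$ we have $\delta(G,V)\ge 1$. For an arbitrary $v\in V^{G}\setminus\{0\}$, the second assumption gives $v\notin V_{0}$, so by definition of $V_{0}$ there exists $f\in(V^{*})^{G}=F[V]^{G}_{1}$ with $f(v)\ne 0$; thus $\epsilon(G,v)=1$. Taking the supremum over all such $v$ yields $\delta(G,V)\le 1$, and combined with the previous bound we conclude $\delta(G,V)=1$.

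There is no real obstacle here beyond keeping the conventions straight — in particular remembering that $\delta(G,V)=0$ is reserved for $V^{G}=\{0\}$, so that the case distinction between $\delta=0$ and $\delta\ge 1$ is handled correctly, and that the definition of $\epsilon(G,v)$ only ranges over strictly positive degrees, so that the inequality $\epsilon(G,v)\le 1$ collapses to an equality. Both directions then become direct translations between the vanishing condition $v\in V_{0}$ and the non-separability of $v$ by linear invariants.
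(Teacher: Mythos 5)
Your proposal is correct and follows essentially the same route as the paper: both directions are direct unwindings of the definitions of $\delta$, $\epsilon$, and $V_{0}$, with the forward implication using that $\epsilon(G,v)=1$ forces a linear invariant not vanishing at $v$, and the converse using that $v\notin V_{0}$ supplies such an invariant. Your slightly more careful bookkeeping ($\epsilon\le 1$ collapsing to $\epsilon=1$ because degrees are positive) is a harmless elaboration of the same argument.
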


\begin{proof}
Assume that $\delta(G,V)=1$. Then clearly $V^{G}\ne\{0\}$, because
otherwise $\delta(G,V)=0$ by definition.  Take $v\in V^{G}\cap
V_{0}$. If $v\ne 0$, we would have $\epsilon(G,v)=1$, hence there
would be an $f\in F[V]^{G}_{1}$ such that $f(v)\ne 0$, a
contradiction to $v\in V_{0}$. Hence $V^{G}\cap V_{0}=\{0\}$.

Conversely, take a $v\in V^{G}\setminus\{0\}$. By assumption,
$v\not\in V_{0}$, hence there is an $f\in F[V]^{G}_{1}$ such that
$f(v)\ne 0$. Therefore, $\epsilon(G,v)=1$ for all $v\in
V^{G}\setminus\{0\}$ and the claim follows.
\end{proof}

\begin{proposition}
Let $G$ be a group of order $|G|=pm$ such that $p,m$ are coprime.
Then for a $G$-module $V$, we have
\[
\delta(G,V)=\left\{\begin{array}{rl}
0&\text{ if } V^{G}=\{0\}\\
1&\text{ if } V^{G}\ne\{0\} \text{ and } V^{G}\cap V_{0}=\{0\}\\
p&\text{ otherwise. }
\end{array}\right.
\]
\end{proposition}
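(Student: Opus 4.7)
The plan is straightforward: the proposition is essentially just a repackaging of the two preceding lemmas, so I would organize the proof as a trichotomy matching the three branches of the conclusion and invoke Lemmas \ref{deltapm} and \ref{deltaeq1} in each branch.

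First I would dispose of the case $V^{G}=\{0\}$: by the very definition of $\delta(G,V)$ given in the introduction (it is $0$ when $V^{G}=\{0\}$), there is nothing more to check. So from here on assume $V^{G}\neq\{0\}$; then $\delta(G,V)\geq 1$, and Lemma \ref{deltapm} narrows the possibilities to $\delta(G,V)\in\{1,p\}$.

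Next I would apply Lemma \ref{deltaeq1}, which exactly characterizes when $\delta(G,V)=1$: this happens if and only if $V^{G}\neq\{0\}$ and $V^{G}\cap V_{0}=\{0\}$. In the second branch of the proposition this hypothesis holds, so $\delta(G,V)=1$. In the third branch, where $V^{G}\cap V_{0}\neq\{0\}$, Lemma \ref{deltaeq1} forces $\delta(G,V)\neq 1$, and combined with the restriction $\delta(G,V)\in\{1,p\}$ from Lemma \ref{deltapm} we conclude $\delta(G,V)=p$.

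There is no real obstacle here: the two previous lemmas were engineered precisely to cover the trichotomy. The only thing to watch is that the cases are mutually exclusive and exhaustive (which they are: the first case is $V^{G}=\{0\}$, the second and third cases split $V^{G}\neq\{0\}$ according to whether $V^{G}\cap V_{0}$ is trivial or not), so the case analysis is clean and complete.
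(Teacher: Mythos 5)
Your proof is correct and follows exactly the route the paper intends: the paper's own proof simply says the proposition is immediate from Lemmas \ref{deltapm} and \ref{deltaeq1}, and your write-up is just that argument spelled out carefully, including the observation that the three cases are exclusive and exhaustive.
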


\begin{proof}
This is immediate from the previous couple of lemmas.
\end{proof}

The benefit of this proposition is that only $V^{G}$ and $(V^{*})^{G}$
need to be known in order to compute $\delta(G,V)$, but not
generators of the full invariant ring $F[V]^{G}$.

\begin{Example}
  Let $G\subseteq S_{p}$ be
any subgroup of order divisible by $p$. Then $G$ contains an element of order $p$, i.e. a $p$-cycle.
Consider the natural action of $G$ on $V:=F^{p}$. Clearly,
$V^{G}=\langle (1,1,\ldots,1)\rangle\subseteq
V_{0}=\mathcal{V}(x_{1}+\ldots+x_{p})$. The proposition implies that
$\delta(G,V)=p$.
\end{Example}

\begin{Example}
 Consider the group
\[
G=\left\{\sigma_{a,b}:=\left(\begin{array}{cc}
1&a\\
0&b
\end{array}\right)\in {\mathbb F}_{p}^{2\times 2} \mid a,b\in {\mathbb F}_{p},\,\, b\ne 0\right\}
\]
of order $|G|=p(p-1)$. Then $G$ acts naturally by left multiplication on the
module  $W:=\langle X,Y\rangle:=F^{2}$ with basis $\{X,Y\}$,
i.e. $\sigma_{a,b}(X)=X$ and $\sigma_{a,b}(Y)=aX+bY$ for all $\sigma_{a,b}\in
G$. Consider the $n$th symmetric power
\[
V_{n}:=S^{n}(W)=\langle e_{0}:=X^{n},\,\, e_{1}:=X^{n-1}Y,\ldots,
e_{n}:=Y^{n}\rangle
\]
with basis $\{e_{0},\ldots,e_{n}\}$. From
\begin{eqnarray*}
\sigma_{a,b}(e_{j})&=&\sigma_{a,b}(X^{n-j}Y^{j})=X^{n-j}(aX+bY)^{j}=\sum_{i=0}^{j}{j
  \choose i}a^{j-i}b^{i}X^{n-i}Y^{i}\\
&=&\sum_{i=0}^{j}{j
  \choose i}a^{j-i}b^{i}e_{i}
\end{eqnarray*}
we see that for $j=1,\ldots,n$, the coefficent of $e_{j-1}$ in $\sigma_{a,b}(e_{j})$ is given by
${j\choose j-1}ab^{j-1}=jab^{j-1}$, which is nonzero if $a=b=1$ and $n<p$. It
follows that $\rank(\sigma_{1,1}-\id_{V_{n}})=n-1$ if $n<p$, and hence
$V_{n}^{\sigma_{1,1}}$ is one dimensional and spanned by $X^{n}$. As
$V_{n}^{G}\subseteq V_{n}^{\sigma_{1,1}}$ and $X^{n}$ is also $G$-invariant, it follows
$V_{n}^{G}=\langle X^{n}\rangle=\langle e_{0}\rangle$. Write
$F[V_{n}]=F[z_{0},\ldots,z_{n}]$, where $z_{i}(e_{j})=\delta_{i,j}$ (the
Kronecker symbol). A similar calculation shows that
$(V_{n}^{*})^{\sigma_{1,1}}=\langle z_{n}\rangle$ if $n<p$, and again we have $(V_{n}^{*})^{G}\subseteq (V_{n}^{*})^{\sigma_{1,1}}$. As
$\sigma_{a,b}(z_{n})=b^{-n}z_{n}$, we see for $1\le n<p$, that is $z_{n}$ is
$G$-invariant only if $n=p-1$. Hence $(V_{n}^{*})^{G}=\{0\}$ for $1\le n\le
p-2$ and $(V_{n}^{*})^{G}=\langle z_{n}\rangle$ if $n=p-1$. In both cases it follows $V_{n}^{G}\subseteq
\mathcal{V}((V_{n}^{*})^{G})$ and hence the proposition above implies $\delta(G,V_{n})=p$ for $1\le n<p$.
\end{Example}

\bibliographystyle{plain}
\bibliography{OurBib}
\end{document}